\DeclareMathOperator*{\forkindep}{\raise0.2ex\hbox{\ooalign{\hidewidth$\vert$\hidewidth\cr\raise-0.9ex\hbox{$\smile$}}}}
\DeclareTextCommand{\DZ}{OT2}{D2}
\newtheorem*{claim-star}{Claim}
\newtheorem*{theorem-non}{Theorem}
\newtheorem{theorem}{Theorem}[section] 
\newtheorem{lemma}[theorem]{Lemma}
\newtheorem{prop-def}[theorem]{Proposition-Definition}
\newtheorem{corollary}[theorem]{Corollary}
\newtheorem{fact-eh}[theorem]{Fact(?)}
\newtheorem{conjecture}[theorem]{Conjecture}
\newtheorem{question}[theorem]{Question}
\newtheorem{proposition}[theorem]{Proposition}
\newtheorem{proposition-eh}[theorem]{Proposition(?)}
\newtheorem*{theorem-star}{Theorem}
\newtheorem*{conjecture-star}{Conjecture}
\newtheorem*{lemma-star}{Lemma}
\theoremstyle{definition}
\newtheorem{definition}[theorem]{Definition}
\newtheorem{example}[theorem]{Example}
\newtheorem{remark}[theorem]{Remark}
\theoremstyle{remark}
\newtheorem{claim}[theorem]{Claim}
\newenvironment{claimproof}[1][\proofname]
               {
                 \proof[#1]
                 
               }
               {
                 \endproof
               }
\title{Enumeration of Finite Distance Monoids}
\author{Yunjie Luo} 
\address{\'Ecole Polytechnique}
\email{yunjie.luo@polytechnique.edu}
\author{Jie Sheng}
\address{Sorbonne Universit\'e}
\email{jie.sheng@etu.sorbonne-universite.fr}
\thanks{Partially supported by NSFC grants 12426507 \& 12501001}
\begin{document}

\maketitle

\begin{abstract}
    Building on the work of Gabriel Conant [\cite{Conant2015}], we investigate the enumeration problems of finite distance monoids by applying the decomposition of Archimedean classes and studying their internal arithmetic progressions. Specifically, we first determine the exact value of $DM(n,2)$, which denotes the number of distance monoids on $n$ non-zero elements with Archimedean complexity $2$. This computation allows us to resolve a conjecture of Conant, establishing that the total number $DM(n)$ of distance monoids grows at least exponentially in $n$. Furthermore, we study the asymptotic behavior of $DM(n,n-k)$ for fixed $k$, proving that $DM(n,n-k) = O(n^k)$ and providing an exact formula for $DM(n,n-2)$. 
\end{abstract}

\section{Introduction}
A distance monoid is a commutative monoid equipped with a compatible total order which appears under various names in the literature, such as a totally ordered commutative monoid [\cite{Evans2001}], a positively ordered monoid (P.O.M.) [\cite{wehrung:hal-00004710}]. These objects naturally arise in different areas of mathematics. One such instance is in the thesis of Gabriel Conant [\cite{Conant2015}], where some surprising connections between finite distance monoids, additive and algebraic combinatorics, and model theory occur. Conant formulated several conjectures concerning the enumeration and structure of finite distance monoids in $\mathbb{R}$. Results related to the latter was obtained by Veljko Toljić [\cite{toljic2023conant}]. Finite distance monoids also play an important role in computer science and related applied fields. For instance, they appear as discrete triangular norms (t-norms) in fuzzy logic and aggregation operator theory [\cite{DeBaets2003}; \cite{klement2013triangular}].

Our work is motivated by the enumeration of finite distance monoids. We can regard distance monoids as a special type of finite distance magmas, i.e., commutative magmas equipped with a compatible total order. Conant [\cite{Conant2015}] showed that there is a bijection between finite distance magmas with $n$ non-zero elements and Magog triangles of order $n$. Since Propp [\cite{propp2001many}] proved that the number of magog triangles of order $n$ is the Robbins number, the enumeration of finite distance magmas is resolved. However, enumerating finite distance monoids presents a greater challenge, primarily due to the difficulty of directly verifying associativity. To study this problem, Conant [\cite{Conant2015}] introduced the concept of Archimedean complexity, which describes the ``complexity" of the addition. The problem thus reduces to determining the number of finite distance monoids with distinct Archimedean complexities. Let $DM(n,k)$ denote the number of distance monoids with $n$ non-zero elements and Archimedean complexity $k$. Conant demonstrated $DM(n,1)=DM(n,n)=1$ and $DM(n,n-1)=2n-2$. 
    
    In this paper, we make significant progress on the enumeration of finite distance monoids. Our first main result is the exact enumeration of $DM(n,2)$.
    \begin{theorem}\label{DM(n,2)} For all $n\geq 2$,
            \[DM(n,2)=\sum_{k=1}^{n-1}\sum\limits_{\substack{n_{1}+\cdots+n_{k}=n \\ n_{i}\in \mathbb{N}_{>0}}}\prod\limits_{j=1}^k j^{n_j-1}.\]
    \end{theorem}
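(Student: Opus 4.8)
The plan is to prove the equivalent closed form
\[
DM(n,2)=\sum_{k=1}^{n-1}S(n,k)=B_n-1,
\]
where $S(n,k)$ is the Stirling number of the second kind and $B_n$ the $n$-th Bell number, and to recover the stated sum as the restricted-growth expansion of each $S(n,k)$. The monoid-theoretic input is a decomposition: for $n\geq 3$, every complexity-$2$ distance monoid on $n$ nonzero elements splits uniquely into its Archimedean classes $C_1<C_2<\cdots<C_k$ (ordered by the monoid order), with $|C_j|=n_j$ and $n_1+\cdots+n_k=n$. I would first prove a \emph{counting lemma}: for a fixed size composition $(n_1,\dots,n_k)$ the number of complexity-$2$ monoids realizing it is exactly $\prod_{j=1}^{k}j^{n_j-1}$. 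Summing over all compositions of $n$ into $k$ positive parts and over the admissible range of $k$ then yields the formula in the statement; the degenerate case $n=2$ (where complexity $2$ coincides with the maximal complexity $n$) is checked directly and gives $DM(2,2)=1$.

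For the counting lemma I would analyze each class through its internal arithmetic progression. Inside $C_j$ the minimal element $g_j$ is a distinguished generator and the remaining $n_j-1$ elements are its successive overflow values; the internal structure of $C_j$, together with how $C_j$ absorbs sums coming from the lower classes $C_1,\dots,C_{j-1}$, is recorded for each of these $n_j-1$ non-minimal elements by a single parameter taking one of exactly $j$ admissible values, corresponding to which of the $j$ classes at or below $C_j$ the relevant sum lands in. I would then show that these parameters are mutually independent across all elements and all classes, so the per-class contributions multiply to $\prod_{j}j^{n_j-1}$. The range of $k$ is pinned down separately: the excluded value $k=n$ forces every class to be a singleton, i.e.\ every element idempotent, whence the operation is $\max$ and the complexity equals $n$; discarding this single maximal-complexity monoid is exactly the $-1$ in $B_n-1$.

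The combinatorial repackaging is then standard. The identity
\[
\sum_{n_1+\cdots+n_k=n}\ \prod_{j=1}^{k}j^{n_j-1}=S(n,k)
\]
is the record-gap expansion of a restricted growth string, where $(n_1,\dots,n_k)$ lists the gaps between consecutive positions at which a new block is opened; the first position of each gap is forced and the other $n_j-1$ positions admit $j$ values each. This both confirms $DM(n,2)=\sum_{k=1}^{n-1}S(n,k)=B_n-1$ and supplies an independent bijective route, matching a complexity-$2$ monoid with $k$ Archimedean classes to a set partition of $[n]$ into $k$ blocks, with the maximal-complexity monoid corresponding to the discarded discrete partition.

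The hard part will be the soundness-and-completeness of the parametrization, namely that the $\prod_j j^{n_j-1}$ parameter vectors are in genuine bijection with monoids: every admissible assignment must yield an \emph{associative} operation of complexity exactly $2$, and no complexity-$2$ monoid may be missed or double counted. Associativity is a global constraint while the choices are made class-by-class, so I would reduce every instance of associativity to triples involving the generators $g_1,\dots,g_k$ (using commutativity and the absorbing top element), show that each such instance is equivalent to a monotonicity inequality already built into the admissible $j$-value ranges, and characterize the complexity invariant on the Archimedean-graded pieces so that ``complexity $2$'' is seen to be precisely the condition isolating the range $1\le k\le n-1$ together with a single $j$-valued freedom per overflow element. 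Verifying that the local choices never violate global associativity, and that the complexity count is exactly $2$ for all of them, is the crux on which the argument turns.
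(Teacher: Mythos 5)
Your proposal follows essentially the same route as the paper: decompose a complexity-$2$ monoid into its Archimedean classes, show that for a fixed composition $(n_1,\dots,n_k)$ the cross-class addition tables number exactly $\prod_j j^{n_j-1}$, and sum over compositions; the paper's Lemmas on the structure of $\oplus$ and the $13$-case associativity verification are precisely the ``soundness-and-completeness of the parametrization'' you correctly identify as the crux (the paper realizes your $j$-valued parameters as chains of subsets $S_1\subseteq\cdots\subseteq S_{j-1}$ of $[n_j]$ containing $n_j$, giving $j^{n_j-1}$). Two slips to fix when you carry this out. First, your description of the per-element parameter is off: a sum $r^{(i)}_{n_i}\oplus r^{(j)}_s$ with $i<j$ always lands in $C_j$ itself, never in a lower class, so the parameter cannot be ``which class the sum lands in''; the correct $j$-valued datum attached to each non-top element of $C_j$ is the largest lower class it absorbs (equivalently, the threshold $i$ below which $r^{(i)}_{n_i}\oplus r^{(j)}_s=r^{(j)}_s$), together with the plateau condition forcing the value of every non-absorbing sum. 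Second, the excluded $k=n$ case (all classes singletons, operation $=\max$) is the unique monoid of Archimedean complexity $1$, not $n$; it is still rightly excluded, but your narrative identifying the $-1$ in $B_n-1$ with the ``maximal-complexity monoid'' has it backwards.
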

    This formula reveals a connection between $DM(n,2)$ and Bell numbers, which count the number of partitions of a set. Using this connection, we resolve  [\cite{Conant2015}, Conjecture 5.1.8(b)], which states that the total number $DM(n)$ of distance monoids on $n$ non-zero elements grows at least as fast as any exponential function, i.e., $DM(n)=\Omega(b^n)$ for every $b>0$, where $f(n)=\Omega(g(n))$ means that there are $c\in \mathbb{R}_{>0}$ and $n_0\in \mathbb{N}_{>0}$ such that $f(n)\geq cg(n)$ for all $n>n_0$.
    
    Our second main result concerns the asymptotic behavior of $DM(n,n-k)$ for fixed $k$.
    
    \begin{theorem}\label{DM(n,n-k)}
            For any fixed $k\in \mathbb{N}$, $DM(n,n-k)=O(n^k).$
    \end{theorem}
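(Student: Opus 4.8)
The plan is to exploit the Archimedean decomposition to split the count into two factors: a coarse choice of how the $n$ nonzero elements are grouped into Archimedean classes, and a bounded structural choice localized to the few ``large'' classes. Write the nonzero elements of such a monoid $M$ as $0 < a_1 < \cdots < a_n$ and let $C_1 < \cdots < C_m$ be its Archimedean classes, which are convex in the order, with $m$ equal to the Archimedean complexity $n-k$. Setting $n_i = |C_i| \ge 1$ we have $\sum_{i=1}^m n_i = n$, hence $\sum_{i=1}^m (n_i - 1) = n - m = k$. In particular at most $k$ of the classes have size $\ge 2$ (call these the \emph{large} classes), and each large class has size at most $k+1$.

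First I would count the possible \emph{profiles} $(n_1,\dots,n_m)$. These are exactly the compositions of $n$ into $m = n-k$ positive parts, of which there are $\binom{n-1}{m-1} = \binom{n-1}{k} = O(n^k)$; equivalently one selects the at most $k$ positions of the large classes among the $m$ slots and distributes the excess $k$, which is again $O(n^k)$. The whole theorem then follows once the second step below is established.

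The substance is the second step: for a fixed profile, the number of distance-monoid structures realizing it should be bounded by a constant $C_k$ depending only on $k$. I would first observe that the addition table splits into (i) the internal values within each class and (ii) the cross-class values $a_i \oplus a_j$ with $a_i$ in a strictly smaller class than $a_j$. Monotonicity and the identity axiom force $a_i \oplus a_j$ to lie in the class of $a_j$, since it sits between $a_j = 0 \oplus a_j$ and $a_j \oplus a_j$, both Archimedean-equivalent to $a_j$; consequently, whenever the larger class is a singleton the value is forced to equal $a_j$, and any singleton class is necessarily idempotent. Thus all nontrivial data is attached to the at most $k$ large classes. Internally, each Archimedean class carries the unique Archimedean structure supplied by $DM(m,1)=1$, up to a bounded ambiguity in whether and where sums ``spill'' into higher classes; since each large class has size $\le k+1$, this contributes only finitely many (in $k$) choices.

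The main obstacle, requiring the most care, is controlling the cross-class interaction into a fixed large class $C_t$: a priori these maps are indexed by the unboundedly many smaller classes, so I must show they are pinned down by constantly-many parameters. The key is that associativity makes the family $f_x \colon y \mapsto x \oplus y$ (for $x$ ranging over elements of smaller classes) coherent: each $f_x$ is monotone and inflationary on the chain $C_t$ of size $\le k+1$, idempotency of the smaller singletons forces $f_x \circ f_x = f_x$, and the relation $x \oplus x' = x'$ for $x < x'$ in distinct smaller classes forces $f_x \circ f_{x'} = f_{x'}$ together with the pointwise bound $f_x \le f_{x'}$. A monotone chain of commuting closure operators on a chain of size $\le k+1$ can change at most $k$ times and is determined by a bounded number of thresholds, so the number of admissible coherent families is bounded in terms of $k$ alone. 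Granting this, each profile admits at most $C_k$ structures, and summing over profiles gives $DM(n,n-k) \le \binom{n-1}{k}\, C_k = O(n^k)$. As a consistency check, for $k=1$ this scheme produces $\binom{n-1}{1}$ profiles each carrying the two structures recorded by $DM(n,n-1)=2n-2$.
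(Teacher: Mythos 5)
Your argument rests on identifying the Archimedean complexity $n-k$ with the number $m$ of Archimedean classes, and this identification is false. The number of Archimedean classes of $R$ equals $\#\{r\neq 0\colon r\oplus r=r\}$, which is in general unrelated to $\mathrm{arch}(R)$: the unique monoid counted by $DM(n,n)$, namely $\{0<r<2r<\cdots<nr\}$, has complexity $n$ but a \emph{single} Archimedean class, and Example~\ref{12567} ($R=\{0,1,2,5,6,7\}$) has complexity $3$ but only two classes. Consequently your ``profiles'' $(n_1,\dots,n_m)$ with $\sum_i(n_i-1)=k$ do not describe the monoids being counted, the claim that every large class has size at most $k+1$ fails, and the second step --- bounding the structures on a fixed profile by a constant because all nontrivial data lives on classes of bounded size --- collapses with it. In reality, for $n\gg k$ a monoid of complexity $n-k$ has one enormous Archimedean class containing an arithmetic progression $\{r<2r<\cdots<(n-k)r\}$ together with at most $k$ further elements, and the polynomial factor $O(n^k)$ comes from choosing the gaps $\ell_i r<a_i<(\ell_i+1)r$ into which those extra elements fall, not from choosing a composition of $n$ into $n-k$ parts. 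A secondary error of the same kind: $DM(m,1)=1$ counts monoids of complexity $1$, not Archimedean monoids, so an Archimedean class does not carry a unique internal structure.

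The missing substance is precisely Theorem~\ref{arith pro}: one must first prove that when $n>4k$ the witnessing sequence for $\mathrm{arch}(S)=n-k$ forces an arithmetic progression of length $n-k$ based at the least element $r$ of the relevant class (this requires Conant's Proposition~3.7.21 together with a nontrivial chain argument ruling out too many elements below $r$). Only then can one partition $S\setminus\{0\}$ into the at most $k$ elements below $r$, the at most $k$ elements above $(n-k)r$, and the block consisting of the progression plus at most $k$ interleaved elements, and verify that each sum has only boundedly many (in $k$) possible values once the gaps are fixed. Your numerical check for $k=1$, namely $\binom{n-1}{1}\cdot 2=2n-2$, is a coincidence: the monoids counted by $DM(n,n-1)$ have one Archimedean class of size at least $n-1$, not $n-1$ classes.
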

    
    This theorem is proved by analyzing the arithmetic progressions within the finite distance monoid. We also provide an exact formula for $DM(n,n-2)$.

    The paper is organized as follows. In Section~\ref{sec:preliminaries}, we review the necessary background on distance monoids and Archimedean classes. Section~\ref{sec:enumeration} is devoted to the proof of Theorem \ref{DM(n,2)} and its connection to Bell numbers. In Section~\ref{sec:arithmetic}, we examine the relationship between arithmetic progressions and Archimedean complexity, leading to the proof of Theorem \ref{DM(n,n-k)}. Finally, in Section~\ref{sec:asymptotic}, we analyze $DM(n,n-k)$ for large $n$ and provide the exact formula for $DM(n,n-2)$.

\section*{Acknowledgement}
    This research was conducted during the PACE (PKU Algebraic Combinatorics Experience) summer REU program at Peking University under the supervision of Kyle Gannon. The authors are deeply grateful to the Beijing International Center for Mathematical Research (BICMR) and Professor Yibo Gao for providing this valuable opportunity. We would also like to express our sincere appreciation to our advisor, Kyle Gannon, for selecting such a suitable and engaging research topic and for actively engaging in discussions with us on related problems. Without his guidance and support, this paper could not have been completed.

\section{Notation and Preliminaries}\label{sec:preliminaries}

    In this section, we establish the notation and basic definitions that will be used throughout the paper. First, the operation $\oplus$ will always refer to the operation in a distance magma or monoid, it is called ``addition" in this paper. Then we begin by introducing the fundamental algebraic structures under investigation.

\begin{definition}[Distance magma and distance monoid]
    A distance magma $R$ is a structure in the language $\{\oplus,\leq,0\}$ with the following properties:\\
        \quad (1) (totality) $\leq$ is a total order on $R$;\\
        \quad (2) (positivity) $r\leq r\oplus s$ for all $r,s\in R$;\\
        \quad (3) (order) for all $r,s,t,u\in R$, if $r\leq t$ and $s\leq u$ then $r\oplus s\leq t\oplus u$;\\
        \quad (4) (commutativity) $r\oplus s = s\oplus r$ for all $r,s\in R$;\\
        \quad (5) (unity) $r\oplus 0 = r = 0\oplus r$ for all $r \in R$;\\
        We say that $R$ is a distance monoid if it is a distance magma and\\
        \quad (6) (associativity) $(r\oplus s)\oplus t = r\oplus (s \oplus t)$ for all $r,s,t\in R$.
\end{definition}
For positive integers $n$, we denote the number of distance monoids with $n$ non-zero elements by $DM(n)$. 

One of the most basic properties of finite distance monoids in this paper is the Archimedean complexity, which was proposed by Conant [\cite{Conant2015}, Definition 3.7.1]. 
\begin{definition}[Archimedean complexity]
            Let $R$ be a distance monoid. The Archimedean complexity of $R$, denoted by $\mathrm{arch}(R)$, is the minimum $n$ such that, for all $r_{0},r_{1}, \cdots,r_{n}\in R$, if $r_{0}\leq r_{1}\leq \cdots\leq r_{n}\in R$,
            \begin{equation*}
                r_{0}\oplus r_{1}\oplus \cdots\oplus r_{n}=r_{1}\oplus \cdots\oplus r_{n}.
            \end{equation*}
            If no such $n$ exists, we set $\mathrm{arch}(R)=\omega$.
\end{definition}
The Archimedean complexity measures how quickly repeated addition reaches a fixed point. It is a key point in our enumeration, as it allows us to classify distance monoids by their ``complexity."
    
    We now introduce the decomposition of Archimedean classes, the central technique of this paper. Firstly, we define the Archimedean classes as follows:
    \begin{definition}[Archimedean class]
            An Archimedean class of a distance magma/monoid $R$ is a maximal subset $S\subset R\setminus\{0\}$ satisfying that 
            for two non-zero elements $r,s\in S$, there exists $n\in\mathbb{N}_{>0}$ such that
            \begin{equation*}
                r\leq ns \coloneq \underbrace{s\oplus\cdots\oplus s}_{\text{$n$ times}}.
            \end{equation*}  
    \end{definition}
    We say that a finite distance magma/monoid is Archimedean if it has only one Archimedean class. Now we consider the properties of Archimedean classes.
    \begin{proposition}\label{unique}
    An Archimedean class has the following properties:
    \begin{enumerate}
        \item Let $S=\{r_{1}<\cdots<r_{k}\}$ be an Archimedean class of $R$. Then for all $r_{i},r_{j}\in S$, $r_{i}\oplus r_{j}\in S$.
        \item Let $S_{1}$ and $S_{2}$ be two different Archimedean classes of $R$. Then $S_{1}\cap S_{2}=\varnothing$.
        \item Let $S_{1}$ and $S_{2}$ be two different Archimedean classes of $R$. We have $S_{1}<S_{2}$ or $S_{2}<S_{1}$, where $S_{1}<S_{2}$ means $r<s$ for all $r\in S_1,\,s\in S_2$.
    \end{enumerate}        
    \end{proposition}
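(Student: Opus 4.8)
The plan is to recast the combinatorially phrased definition of an Archimedean class as the equivalence classes of an explicit equivalence relation on $R\setminus\{0\}$, after which all three assertions follow cleanly. For nonzero $r,s$ I would define $r\sim s$ to mean that there exist $m,n\in\mathbb{N}_{>0}$ with $r\le ns$ and $s\le mr$. Reflexivity is immediate (take $n=1$, so $r\le 1\cdot r=r$), symmetry is built into the definition, and transitivity is the one step that genuinely uses the ordered structure: from $r\le ns$ and $s\le mt$ I would apply the order axiom (3) repeatedly to ``multiply through'', getting $ns\le n(mt)=(nm)t$ and hence $r\le(nm)t$, with the symmetric bound giving $t\le(\cdots)r$, so $r\sim t$. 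The monotonicity packaged in axiom (3)—that $a\le b$ forces $na\le nb$—is the workhorse here, and is the only place where associativity (or, in the magma case, the chosen meaning of $ns$) needs to be invoked.

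Next I would verify that the $\sim$-classes are exactly the Archimedean classes. Every $\sim$-class manifestly satisfies the defining property, since any two of its members $r,s$ satisfy $r\le ns$; and it is maximal, because if $s$ lay in a larger set with the property then applying the property to $s$ and a fixed class member $r$ in both orders would yield $r\sim s$, forcing $s$ back into the class. Conversely, a maximal set $S$ with the property has all its elements pairwise $\sim$-equivalent, so $S\subseteq[r]$ for any $r\in S$, and maximality upgrades this to equality. This identification delivers assertion (2) for free, as distinct classes of an equivalence relation are disjoint. For assertion (1) I fix $r_i,r_j\in S$; positivity (together with the observation that $0\le 0\oplus s=s$ for every $s$, so $0$ is the least element and any sum of nonzero elements is nonzero) shows $r_i\oplus r_j\in R\setminus\{0\}$ and $r_i\le r_i\oplus r_j$, while $r_j\le mr_i$ (valid since $r_i\sim r_j$) combined with axiom (3) gives $r_i\oplus r_j\le r_i\oplus mr_i=(m+1)r_i$. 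Hence $r_i\oplus r_j\sim r_i$, so $r_i\oplus r_j\in S$.

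Finally, for assertion (3) the key intermediate fact is that $\sim$-classes are order-convex: if $a\le c\le b$ with $a,b,c$ nonzero and $a\sim b$, then $a\sim c$, because $a\le c$ directly and $c\le b\le ma$ for a suitable $m$. Granting convexity, two distinct (hence disjoint) classes $S_1,S_2$ must be comparable: if neither $S_1<S_2$ nor $S_2<S_1$, one extracts witnesses $b<a$ with $b\in S_2,\ a\in S_1$ and $x<y$ with $x\in S_1,\ y\in S_2$, and a short case split on the position of $x$ relative to $b$ produces an element of one class lying strictly between two elements of the other, contradicting convexity and disjointness. I expect this convexity-plus-comparability argument to be the main obstacle, since it is the only part that is not a direct unwinding of the equivalence relation; the transitivity computation, where one must correctly ``multiply'' inequalities through repeated addition, is the secondary technical point to handle with care.
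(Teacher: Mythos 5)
Your proof is correct, and the underlying inequalities are the same ones the paper uses, but the packaging is genuinely different. The paper never introduces an equivalence relation: it proves (1) by bounding $r_i\oplus r_j\le (s+t)r_1$ where $r_1$ is the least element of the class, proves (2) by chaining $r\le ms\le (mk)t$ and invoking maximality to get mutual containment, and proves (3) by comparing the minima of the two classes and showing that any $u\in S_1$ with $u\ge\min S_2$ would force $\min S_2$ into $S_1$. Your reformulation of Archimedean classes as the classes of the relation $r\sim s$ iff $r\le ns$ and $s\le mr$ makes (2) automatic --- transitivity of $\sim$ is exactly the computation the paper performs for (2) --- and your convexity lemma is the same sandwich argument as the paper's (3), just stated without reference to minima, which incidentally makes your version of (3) valid for infinite classes, whereas the paper silently uses that $\min S_1$ and $\min S_2$ exist (harmless in the finite setting it cares about). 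What your framing buys is conceptual clarity about where maximality is used (only to identify the maximal sets with the $\sim$-classes); what the paper's direct route buys is brevity. One shared loose end: both you and the paper manipulate expressions like $n(mt)=(nm)t$ and $(s+t)r_1$, which in the magma case (where the proposition is also asserted) depends on the unstated convention for parenthesizing $ns$; you at least flag this, while the paper does not.
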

    \begin{proof}
        (1) By definition, we may assume $r_{i}\leq sr_{1}$ and $r_{j}\leq tr_{1}$. Then we have $r_{i}\oplus r_{j} \leq (s+t)r_{1}$. It implies $r_{i}\oplus r_{j}\in S$.\\
        (2) Suppose, for the sake of contradiction, that $s\in S_{1}\cap S_{2}$. For all $r\in S_{1}$, by definition, we have $r\leq ms$ for some $m\in \mathbb{N}_{>0}$. For all $t\in S_{2} $, by definition, we also have $s\leq kt$ for some $k\in \mathbb{N}_{>0}$. Notice that $r\leq ms \leq (mk)t$, it implies $S_{1}\subseteq S_{2}$. For the same reason, we also have $S_{2}\subseteq S_{1}$, which implies $S_{1}=S_{2}$. Contradiction. \\
        (3) Without loss of generality, assume that
        \[r=\min S_{1}<\min S_2=s.\]
        If there exists $u\in S_1$ such that $u\geq s$. Then we have $s\leq u\leq mr\leq ms$ for some $m\in \mathbb{N}_{>0}$. By definition, it implies $s\in S_1$, which is a contradiction. So we have $u<s$ for all $u\in S_1$, i.e., $S_1<S_2$.
    \end{proof}
    This proposition shows that Archimedean classes are well-behaved under order and addition: they are closed under addition, disjoint, and totally ordered. These properties are essential for the decomposition theorem.
    \begin{corollary}
        Adding $0$ to an Archimedean class and considering the induced order give us a distance submagma/submonoid. We call it an Archimedean submagma/submonoid, since it is Archimedean.
    
    \end{corollary}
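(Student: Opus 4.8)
The plan is to set $T = S \cup \{0\}$ and to check the only two substantive requirements: that $T$ is closed under $\oplus$, and that it then inherits all the distance-magma (resp. monoid) axioms from $R$. The closure is where Proposition~\ref{unique} does the real work. For any two nonzero $r,s \in T$, both lie in $S$, so part (1) of the proposition gives $r \oplus s \in S \subseteq T$. The remaining cases involve $0$: by the unity axiom $0 \oplus r = r \in T$ for every $r \in T$, and $0 \oplus 0 = 0 \in T$. Hence $\oplus$ restricts to a well-defined binary operation on $T$.

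Once closure is in hand, the magma axioms come essentially for free. Axioms (2)--(5) (positivity, order-compatibility, commutativity, unity) --- and associativity (6) in the monoid case --- are all universally quantified inequalities or identities that hold for every tuple of elements of $R$; since $T \subseteq R$, they continue to hold for every tuple drawn from $T$, so no fresh verification is needed. The order $\leq$ on $T$ is simply the restriction of the total order on $R$, and the restriction of a total order to any subset is again total, giving axiom (1). The element $0 \in T$ is still the identity and still the minimum of $T$, so $T$ is indeed a distance submagma, and a distance submonoid whenever $R$ is a monoid.

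Finally, to justify the name, I would observe that $T$ is itself Archimedean. By the definition of an Archimedean class, any two nonzero elements $r,s$ of $T$ --- both being elements of $S$ --- already satisfy $r \leq ns$ for some $n \in \mathbb{N}_{>0}$ computed inside $R$, and these same witnesses work inside $T$ because $T$ is closed under $\oplus$. Thus every nonzero element of $T$ lies in a single Archimedean class, namely $S$, so $T$ has exactly one Archimedean class and is Archimedean, which is the content of the final clause.

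I do not anticipate any genuine obstacle here: the statement is an inheritance/restriction argument whose one nontrivial ingredient --- closure of $S$ under addition --- has already been supplied by Proposition~\ref{unique}(1). The only point requiring a moment's care is to treat the cases involving the adjoined element $0$ separately, since $0$ is not itself a member of $S$.
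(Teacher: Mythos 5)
Your proposal is correct and matches the paper's (implicit) argument: the paper states this corollary without proof, relying exactly on Proposition~\ref{unique}(1) for closure of the class under $\oplus$, with the remaining axioms and the Archimedean property inherited by restriction just as you describe.
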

    With the preparation work above, we can now present the decomposition of Archimedean classes.
    \begin{theorem}\label{decomp}
        Every finite distance magma/monoid has a unique decomposition of Archimedean classes $R\setminus\{0\}=\sqcup_{i} S_{i}$ where $S_i$'s are Archimedean classes of $R$ and $S_{i}<S_{j}$ if and only if $i<j$.
    \end{theorem}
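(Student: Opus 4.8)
The plan is to exhibit the decomposition as the partition of $R\setminus\{0\}$ into the equivalence classes of a single binary relation, and then to read off the ordering and the uniqueness directly from Proposition~\ref{unique}. Write $r\preceq s$ for ``there exists $n\in\mathbb{N}_{>0}$ with $r\leq ns$'', and set $r\sim s$ iff $r\preceq s$ and $s\preceq r$. The first observation is that a nonempty subset $S\subseteq R\setminus\{0\}$ satisfies the defining condition of an Archimedean class exactly when all its elements are pairwise $\sim$-related: applying the condition to the ordered pair $(r,s)$ and then to $(s,r)$ yields the two inequalities witnessing $r\sim s$, and the converse is immediate. Consequently the Archimedean classes, being the maximal such subsets, are precisely the $\sim$-classes, provided $\sim$ is genuinely an equivalence relation.

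Establishing that $\sim$ is an equivalence relation is the heart of the argument. Reflexivity follows from $r\leq 1\cdot r$ and symmetry is built into the definition, so the content is transitivity of $\preceq$: if $r\leq n_{1}s$ and $s\leq n_{2}t$, I would substitute the second inequality into the first by monotonicity (axiom~(3)), replacing each copy of $s$ by the $n_{2}$-fold sum of $t$, to conclude $r\leq n_{1}(n_{2}t)$ and hence $r\leq Nt$ with $N=n_{1}n_{2}$. In the monoid case associativity makes $ns$ unambiguous and this collapse of iterated sums is literal. The one delicate point is the magma case, where $\oplus$ need not be associative and $ns$ is a priori bracketing-dependent; here I would read $\preceq$ as ``$r\leq B$ for some bracketing $B$ of $n$ copies of $s$'' and carry out the substitution at the level of bracketings---replacing each leaf $s$ of $B$ by a bracketing $C$ of $t$ with $s\leq C$ produces a bracketing of $t$ dominating $r$ by repeated use of monotonicity---so transitivity survives without any appeal to associativity. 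Once $\sim$ is an equivalence relation, its classes partition $R\setminus\{0\}$; in particular every nonzero element lies in a unique Archimedean class (its $\sim$-class, which is maximal because any set properly containing it would contain an element not $\sim$-related to its members), giving existence of the decomposition and recovering the disjointness of Proposition~\ref{unique}(2).

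Finally I would order and index the classes and deduce uniqueness. By Proposition~\ref{unique}(3) any two distinct Archimedean classes are comparable, and this comparison is a strict total order on the set of classes, which is finite since $R$ is finite; listing them in increasing order gives $S_{1}<S_{2}<\cdots<S_{m}$ with $S_{i}<S_{j}$ iff $i<j$. Uniqueness is then forced: the Archimedean classes must be the $\sim$-classes, so the partition $R\setminus\{0\}=\bigsqcup_{i}S_{i}$ is uniquely determined as a set of blocks, and Proposition~\ref{unique}(3) pins down the order-compatible indexing. The main obstacle I anticipate is exactly the transitivity step in the non-associative magma setting; everything else is a formal consequence of Proposition~\ref{unique} together with the finiteness of $R$.
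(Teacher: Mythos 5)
Your argument is correct, but it follows a genuinely different route from the paper's. The paper proceeds by induction on $|R|$: it takes the least nonzero element $r_{1}$, uses finiteness to find $m$ with $mr_{1}=(m+1)r_{1}=r_{k}$, checks that the initial segment $\{r_{1},\dots,r_{k}\}$ is an Archimedean class, removes it, and recurses on the remaining distance magma/monoid; uniqueness is then a separate step, obtained by intersecting two putative decompositions and invoking Proposition~\ref{unique}(2). You instead observe that the defining condition of an Archimedean class says precisely that its elements are pairwise equivalent under $r\sim s$ iff ($r\preceq s$ and $s\preceq r$), reduce everything to transitivity of $\preceq$, and identify the Archimedean classes with the $\sim$-classes, after which existence, disjointness, and uniqueness of the partition are all immediate and Proposition~\ref{unique}(3) with finiteness supplies the order-compatible indexing. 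Your version has two advantages: uniqueness comes for free rather than as a separate intersection argument, and you are the only one to confront the bracketing ambiguity of $ns$ in the non-associative magma case (the leaf-substitution argument for transitivity is exactly the right fix, and the paper glosses over this point, as well as over the easy but unstated fact that $R\setminus S_{1}$ is again a distance magma/monoid whose Archimedean classes are those of $R$). What the paper's induction buys in exchange is explicit structural information -- each class is an interval of the order terminating in an idempotent $mr_{1}$ -- which is what the subsequent proposition counting Archimedean classes via idempotents reuses. Both proofs are valid; note also that your partition-into-$\sim$-classes argument does not actually need finiteness (only the final indexing does), so it is slightly more general.
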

    \begin{proof}
        Let $R=\{0<r_{1}<\cdots<r_{n}\}$ be a finite distance magma/monoid. We prove it by induction on $n$. We consider the arithmetic progression $r_{1},2r_{1},\cdots$. Since $R$ is finite, there exists $m\in \mathbb{N}_{>0}$ such that $mr_{1}=(m+1)r_{1}$. This implies $\ell r_1=mr_1$  for all $\ell\geq m$. We may assume $mr_1=r_{k}$. Since for all $1\leq i,j\leq k$
         \begin{equation*}
            r_{j}\leq r_{k}=mr_{1}\leq mr_{i},
        \end{equation*}
            $\{r_{1}<\cdots<r_{k}\}$ is an Archimedean class of $R$. We denote it by $S_1$.
            
            Note that $R\setminus S_1$ is still a distance magma/monoid. Then we use the hypothesis of induction to $R\setminus S_1$. It implies the existence of decomposition of Archimedean classes.
            
            Suppose that $R\setminus\{0\}=\sqcup_{i} S_{i}=\sqcup_{j}T_{j}$ are two decompositions of Archimedean classes. Then we have
            \[S_{i}=S_{i}\cap(\sqcup_{j}T_{j})=\sqcup_{j}(S_i\cap T_j).\]
            By the second statement of {Proposition \ref{unique}}, $S_i\cap T_j$ is either $\varnothing$ or $S_{i}$. So there must exist a unique $j$ such that $T_{j}=S_{i}$, which implies the uniqueness of the decomposition.
    \end{proof}
    This theorem guarantees that every finite distance magma/monoid can be uniquely decomposed into Archimedean classes, ordered by the natural order. This decomposition is the foundation for our enumeration approach, as it reduces the problem to studying simpler Archimedean submonoids.
    
    The following proposition provides us with a method to compute the number of Archimedean classes.
    \begin{proposition}
            The number of Archimedean classes of finite distance magma/monoid $R$ is $\#\{r\in R\setminus\{0\}\colon r\oplus r=r\}$.
    \end{proposition}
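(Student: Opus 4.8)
The plan is to establish an explicit bijection between the set of Archimedean classes of $R$ and the set $\{r\in R\setminus\{0\}\colon r\oplus r=r\}$ of nonzero idempotents, which immediately yields the equality of cardinalities. By Theorem~\ref{decomp} the Archimedean classes partition $R\setminus\{0\}$, so the number of classes is well defined, and I would define the candidate bijection to send each class $S=\{r_1<\cdots<r_k\}$ to its maximal element $\max S = r_k$.

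First I would check that this map lands in the set of nonzero idempotents. Since $r_k,r_k\in S$, Proposition~\ref{unique}(1) gives $r_k\oplus r_k\in S$, whence $r_k\oplus r_k\leq r_k$ because $r_k=\max S$; on the other hand positivity (axiom (2)) gives $r_k\leq r_k\oplus r_k$. Combining the two inequalities yields $r_k\oplus r_k=r_k$, so $\max S$ is a nonzero idempotent. Injectivity is then immediate: distinct classes are disjoint by Proposition~\ref{unique}(2), so their maxima, lying in distinct classes, cannot coincide.

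For surjectivity I would take an arbitrary nonzero idempotent $r$ and show it is the maximum of the Archimedean class $S$ containing it. A short induction shows $nr=r$ for every positive integer $n$, using $r\oplus r=r$ at each step. Now if some $s\in S$ satisfied $s>r$, the defining property of an Archimedean class would furnish $n$ with $s\leq nr=r$, contradicting $s>r$; hence $r=\max S$ and the map hits every idempotent. These three verifications together prove that $S\mapsto\max S$ is a bijection, and the proposition follows. The only delicate point to state carefully is the surjectivity step, where one must combine $nr=r$ with the correct orientation of the Archimedean inequality $s\leq nr$; everything else reduces to the closure and disjointness already recorded in Proposition~\ref{unique}.
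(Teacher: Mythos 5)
Your proof is correct and follows essentially the same route as the paper: both arguments rest on showing that the maximum of each Archimedean class is idempotent (closure of the class under $\oplus$ plus positivity) and that any idempotent must be the maximum of its class (since $nr=r$ for all $n$ forces every larger element into a different class). You merely package the paper's two-sided counting argument as an explicit bijection $S\mapsto\max S$, which is a slightly cleaner presentation of the same content.
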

    \begin{proof}
    Assume $R=\{0<r_{1}<r_{2}<\cdots<r_{n}\}$. Suppose $R$ admits a decomposition
    \[r^{(1)}_{1}<\cdots<r^{(1)}_{n_{1}}<r^{(2)}_{1}<\cdots<r^{(2)}_{n_{2}}<\cdots<r^{(k)}_{1}<\cdots<r^{(k)}_{n_{k}},\]
    where $r_1^{(i)}<\cdots<r_{n_i}^{(i)},\,i=1,2,\cdots,k$ are $k$ Archimedean classes,
    with $n_{1}+\cdots+n_{k}=n$. Then we must have $r^{(i)}_{n_i}\oplus r^{(i)}_{n_i}=r^{(i)}_{n_i}$, which implies $k\leq \#\{i\colon r_{i}\oplus r_{i}=r_{i}\}$. If $k< \#\{i\colon r_{i}\oplus r_{i}=r_{i}\}$, it means that there exists an element $r^{(j)}_{m}$ with $1\leq m< n_{j}$ such that $r^{(j)}_{m}\oplus r^{(j)}_{m}=r^{(j)}_{m}$. Notice that $r_{m}^{(j)}$ and $r^{(j)}_{n_j}$ lie in distinct Archimedean classes since $\ell\cdot r^{(j)}_{m}=r^{(j)}_{m}<r^{(j)}_{n_j}$ for all $\ell\in\mathbb{N}_{>0}$. Contradiction. Therefore, we conclude that $k= \#\{i\colon r_{i}\oplus r_{i}=r_{i}\}$.
    \end{proof}
    In the end, we give an example of a finite distance monoid, which illustrates that the Archimedean complexity of a monoid is not simply the maximum complexity of its Archimedean submonoids. 
    \begin{example}\label{12567}
        Let  $R=\{0,1,2,5,6,7\}$ be a subset of $\mathbb{N}$. Define the addition as follows:
        \begin{equation*}\label{add}
            a\oplus b=\sup_{r\leq a+b,\,r\in R}r,
        \end{equation*}
        where $+$ is the natural addition in $\mathbb{N}$. Then $R$ is a finite distance monoid with complexity $3$. It has two Archimedean submonoids $\{0,1,2\}$ and $\{0,5,6,7\}$. However, these two Archimedean submonoids both have Archimedean complexity $2$.
    \end{example}
    This example leads to a deeper discussion in Section \ref{sec:arithmetic}.
    
    \section{Enumeration of $DM(n,2)$}\label{sec:enumeration}
    In this section, we focus on the enumeration of distance monoids with Archimedean complexity 2. We characterize their structure through properties of the addition operation and use this characterization to derive an exact formula for $DM(n,2)$. This formula reveals a connection to Bell numbers, allowing us to resolve Conant's conjecture.
    
     In this section, unless otherwise stated, let $R=\{0<r_{1}<r_{2}<\cdots<r_{n}\}$ be a finite distance monoid with $n$ non-zero elements and Archimedean complexity $2$. By Theorem \ref{decomp}, we can decompose $R$ into Archimedean classes 
    \[r^{(1)}_{1}<\cdots<r^{(1)}_{n_{1}}<
     r^{(2)}_{1}<\cdots<r^{(2)}_{n_{2}}<\cdots<
     r^{(k)}_{1}<\cdots<r^{(k)}_{n_{k}},\] 
     with $n_{1}+\cdots+n_{k}=n$ and $n_i>1$ for at least one $i$.

     Before giving a proof of {Theorem \ref{DM(n,2)}}, we will prove a series of lemmas. The first lemma provides us with some properties of the addition in $R$.   
     \begin{lemma}\label{lemma1}
     By convention, we denote the set $\{1,2,\cdots,n\}$ by $[n]$. The addition $\oplus$ of $R$ has the following properties: 
     \begin{enumerate}
         \item $r^{(i)}_{s}\oplus r^{(i)}_{t}=r^{(i)}_{n_{i}},\,\forall s,t\in [n_i].$
         \item Suppose $1\leq i<j\leq k$. Then $r^{(i)}_{1}\oplus r^{(j)}_{s}=r^{(i)}_{m}\oplus r^{(j)}_{s}=r^{(j)}_{t}$ for all $m\in[n_i]$  with $s\leq t\in [n_j]$.
         \item Suppose $1\leq i<j\leq k$ and $r^{(i)}_{1}\oplus r^{(j)}_{s}=r^{(j)}_{t}$ with $s\leq t\in [n_j]$. Then we have $r^{(i)}_{u}\oplus r^{(j)}_{m}=r^{(j)}_{t},\,\forall u\in[n_i],\,\forall s\leq m\leq t$ and $r\oplus r^{(j)}_{t}=r^{(j)}_{t},\,\forall r\leq r_{n_i}^{(i)}$. 
     \end{enumerate}
     \end{lemma}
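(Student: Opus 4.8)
The plan is to run every computation off three ingredients: the complexity-$2$ identity — that $x\oplus y\oplus z=y\oplus z$ whenever $x\leq y\leq z$ — the closure of each Archimedean class under $\oplus$ (Proposition \ref{unique}(1)), and the fact that by Theorem \ref{decomp} each class occupies a consecutive block of $R$, so any element lying (weakly) between $r^{(j)}_{1}$ and $r^{(j)}_{n_j}$ automatically belongs to the $j$-th class. Associativity of the monoid lets me freely regroup the threefold sums that the complexity-$2$ identity acts on.

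For part (1) I first pin down the top of a class with $n_i\geq 2$. Writing $a=r^{(i)}_{1}$, such a class is not a singleton, so $a$ is not idempotent, i.e. $a\oplus a>a$. Closure gives $a\oplus a\in S_i$, hence $a\oplus a\leq r^{(i)}_{n_i}$; conversely, since $S_i$ is Archimedean there is $N$ with $r^{(i)}_{n_i}\leq Na$, and as $n_i\geq 2$ forces $N\geq 2$, iterating the complexity-$2$ identity on $a\leq a\leq a$ collapses $Na$ down to $a\oplus a$, so $r^{(i)}_{n_i}\leq a\oplus a$. Thus $r^{(i)}_{n_i}=a\oplus a$. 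For arbitrary $s,t$ the order axiom (3) gives $r^{(i)}_{s}\oplus r^{(i)}_{t}\geq a\oplus a=r^{(i)}_{n_i}$, while closure gives $\leq r^{(i)}_{n_i}$, so equality holds; the case $n_i=1$ is immediate.

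For part (2) the substantive point is $m$-independence of the sum, and this is where I expect the one genuine trick to lie. I bound $r^{(i)}_{m}\leq r^{(i)}_{1}\oplus r^{(i)}_{1}$ — which is exactly part (1) — so monotonicity yields $r^{(i)}_{m}\oplus r^{(j)}_{s}\leq r^{(i)}_{1}\oplus r^{(i)}_{1}\oplus r^{(j)}_{s}$, and the complexity-$2$ identity applied to $r^{(i)}_{1}\leq r^{(i)}_{1}\leq r^{(j)}_{s}$ rewrites the right-hand side as $r^{(i)}_{1}\oplus r^{(j)}_{s}$. Since $r^{(i)}_{1}\leq r^{(i)}_{m}$ gives the reverse inequality, all these sums coincide. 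Finally positivity together with $r^{(i)}_{1}\oplus r^{(j)}_{s}\leq r^{(j)}_{s}\oplus r^{(j)}_{s}=r^{(j)}_{n_j}$ (part (1)) places the common value in $[r^{(j)}_{s},r^{(j)}_{n_j}]$, which is inside the $j$-th class by the interval structure, so it equals some $r^{(j)}_{t}$ with $t\geq s$.

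For part (3) the key computation is that $r^{(j)}_{t}$ is already absorbing for $r^{(i)}_{1}$: since $r^{(j)}_{t}=r^{(i)}_{1}\oplus r^{(j)}_{s}$, associativity and the complexity-$2$ identity give $r^{(i)}_{1}\oplus r^{(j)}_{t}=r^{(i)}_{1}\oplus r^{(i)}_{1}\oplus r^{(j)}_{s}=r^{(i)}_{1}\oplus r^{(j)}_{s}=r^{(j)}_{t}$. For $s\leq m\leq t$ I then squeeze: $r^{(j)}_{s}\leq r^{(j)}_{m}$ gives $r^{(j)}_{t}\leq r^{(i)}_{1}\oplus r^{(j)}_{m}$, while $r^{(j)}_{m}\leq r^{(j)}_{t}$ combined with the absorbing property gives $r^{(i)}_{1}\oplus r^{(j)}_{m}\leq r^{(i)}_{1}\oplus r^{(j)}_{t}=r^{(j)}_{t}$, so $r^{(i)}_{1}\oplus r^{(j)}_{m}=r^{(j)}_{t}$, and part (2) upgrades $r^{(i)}_{1}$ to any $r^{(i)}_{u}$. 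Taking $u=n_i$ and $m=t$ shows $r^{(i)}_{n_i}\oplus r^{(j)}_{t}=r^{(j)}_{t}$, and then for any $r\leq r^{(i)}_{n_i}$ positivity and monotonicity squeeze $r\oplus r^{(j)}_{t}$ between $r^{(j)}_{t}$ and $r^{(i)}_{n_i}\oplus r^{(j)}_{t}=r^{(j)}_{t}$, giving the final absorption claim. Apart from the intermediate-bound trick in part (2), the remaining work is careful bookkeeping with monotonicity and closure; the one point to watch is that every intermediate sum genuinely lands in the intended class, which is precisely what the interval structure of Theorem \ref{decomp} guarantees.
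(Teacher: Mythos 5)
Your proposal is correct and takes essentially the same route as the paper's proof: both hinge on $r^{(i)}_1\oplus r^{(i)}_1=r^{(i)}_{n_i}$, on squeezing $r^{(i)}_m\oplus r^{(j)}_s$ between $r^{(i)}_1\oplus r^{(j)}_s$ and $\bigl(r^{(i)}_1\oplus r^{(i)}_1\bigr)\oplus r^{(j)}_s$ via the complexity-$2$ identity, and on the interval structure of the Archimedean classes to locate the sums. The only cosmetic differences are that you prove (1) directly rather than by contradiction, and in (3) you establish absorption for $r^{(i)}_1$ first and upgrade to $r^{(i)}_u$ via part (2), where the paper applies the complexity-$2$ identity to $r^{(i)}_u\oplus r^{(i)}_{n_i}\oplus r^{(j)}_s$ in one step.
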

     \begin{proof}
        (1) It suffices to prove $r^{(i)}_{1}\oplus r^{(i)}_{1}=r^{(i)}_{n_{i}}$. Suppose, for the sake of contradiction, that $r^{(i)}_{1}\oplus r^{(i)}_{1}<r^{(i)}_{n_{i}}$. Since $\mathrm{arch}(R)=2$, for any $n$ we have
    \begin{equation*}
        nr^{(i)}_{1}=\underbrace{r^{(i)}_{1}\oplus \cdots\oplus r^{(i)}_{1}}_{\text{$n$ times}}=r^{(i)}_{1}\oplus r^{(i)}_{1}<r^{(i)}_{n_{i}}.
    \end{equation*}
        This implies that $r^{(i)}_{1}$ and $r^{(i)}_{n_{i}}$ lie in distinct Archimedean classes, which is a contradiction.\\
        (2) Since $r^{(j)}_{s}\leq r^{(i)}_{1}\oplus r^{(j)}_{s}\leq r^{(j)}_{s}\oplus r^{(j)}_{s}=r^{(j)}_{n_j}$, we conclude $r^{(i)}_{1}\oplus r^{(j)}_{s}=r^{(j)}_{t}$ with $s\leq t\in [n_j]$.
        Notice that\[r^{(i)}_{n_i}\oplus r^{(j)}_{s}=r^{(i)}_{1}\oplus r^{(i)}_{1}\oplus r^{(j)}_{s}=r^{(i)}_{1}\oplus r^{(j)}_{s}.\] The last equality is because $R$ is of Archimedean complexity 2. So we have
        \[
            r^{(j)}_{t}=r^{(i)}_{1}\oplus r^{(j)}_{s}\leq  r^{(i)}_{m}\oplus r^{(j)}_{s}\leq r^{(i)}_{n_i}\oplus r^{(j)}_{s}=r^{(j)}_{t},
        \]
        i.e. $r^{(i)}_{m}\oplus r^{(j)}_{s}=r^{(j)}_{t},\,\forall m\in[n_i]$.\\
        (3) By the second property, we have
        \begin{align*}     
            r^{(i)}_{u}\oplus r^{(j)}_{t}&=r^{(i)}_{u}\oplus \left(r^{(i)}_{1}\oplus r^{(j)}_{s}\right)\\
            &=r^{(i)}_{u}\oplus \left(r^{(i)}_{n_i}\oplus r^{(j)}_{s}\right).
        \end{align*}
        Since $R$ is of Archimedean complexity 2, we have 
        \begin{equation*}
            r^{(i)}_{u}\oplus \left(r^{(i)}_{n_i}\oplus r^{(j)}_{s}\right)
            =r^{(i)}_{n_i}\oplus r^{(j)}_{s}=r^{(j)}_{t},
        \end{equation*}
        i.e. $ r^{(i)}_{u}\oplus r^{(j)}_{t}=r^{(j)}_{t}.$
        So we conclude
        \begin{equation*}
        r^{(j)}_{t}=r^{(i)}_{1}\oplus r^{(j)}_{s}\leq r^{(i)}_{u}\oplus r^{(j)}_{m}\leq r^{(i)}_{u}\oplus r^{(j)}_{t}=r^{(j)}_{t}.
        \end{equation*}
        By choosing $u=n_i$ and $m=t$, we have
        \[r_{n_i}^{(i)}\oplus r^{(j)}_{t}=r^{(j)}_{t}.\]
        Hence we have
        \[r^{(j)}_{t}\leq r\oplus r^{(j)}_{t}\leq r_{n_i}^{(i)}\oplus r^{(j)}_{t}=r^{(j)}_{t}\]
        for all $r\leq r_{n_i}^{(i)}$.
    \end{proof}

    The second lemma shows that the properties in Lemma \ref{lemma1} characterize finite distance monoids of Archimedean complexity 2.
    \begin{lemma}\label{lemma2}
        Let $R^{\prime}$ be a finite distance magma. Suppose $R^{\prime}$ admits a decomposition of Archimedean classes \[r^{(1)}_{1}<\cdots<r^{(1)}_{n_{1}}<
        r^{(2)}_{1}<\cdots<r^{(2)}_{n_{2}}<\cdots<
        r^{(m)}_{1}<\cdots<r^{(m)}_{n_{m}}\] 
        with $n_{1}+\cdots+n_{m}=n$ and $n_i>1$ for at least one $i$. If the addition satisfies the three properties given in Lemma \ref{lemma1}, then $R^{\prime}$ is a distance monoid of Archimedean complexity $2$.
    \end{lemma}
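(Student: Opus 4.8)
The statement to prove has two parts: that $R'$ is \emph{associative} (hence a monoid, since all the other distance-magma axioms are assumed), and that its Archimedean complexity equals $2$. The plan is to exploit the fact that the three properties of Lemma~\ref{lemma1} completely determine every product. Writing $\kappa(x)$ for the index of the Archimedean class containing a nonzero $x$, one checks immediately from properties (1)--(2) that
\[\kappa(x\oplus y)=\max\bigl(\kappa(x),\kappa(y)\bigr).\]
In particular both bracketings of a triple product $a\oplus b\oplus c$ land in the same top class $J=\max(\kappa(a),\kappa(b),\kappa(c))$, so associativity becomes the problem of pinning down \emph{which} element of class $J$ each side produces. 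To organize this I would introduce, for $i<j$, the ``push-up'' map $\pi_i$ on class $j$ defined by $\pi_i(x)=r^{(i)}_1\oplus x$: by property (2) the product of \emph{any} class-$i$ element with $x$ equals $\pi_i(x)$, while by property (1) the product of two elements of the same class $j$ is its top element $r^{(j)}_{n_j}$. Thus every product is expressible through the maps $\pi_i$ and the ``tops''.

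The heart of the argument is a composition law for these maps. First I would record three facts, all immediate from Lemma~\ref{lemma1}: each $\pi_i$ satisfies $\pi_i(x)\ge x$ (property (2), as $s\le t$) and is idempotent (property (3), first part, with $m=t$); the top of any class is absorbing, i.e.\ $r\oplus r^{(j)}_{n_j}=r^{(j)}_{n_j}$ for every $r\le r^{(j)}_{n_j}$ (property (1) for $r$ in class $j$, and property (3) applied with $s=t=n_j$ for $r$ in a lower class); and the maps are monotone in the index, $\pi_i(x)\le\pi_\ell(x)$ for $i\le\ell$, which follows from $r^{(i)}_1\le r^{(\ell)}_1$ and the order axiom. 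Combining these yields the key identity
\[\pi_i\circ\pi_\ell=\pi_{\max(i,\ell)}\qquad\text{on class }J,\ \ i,\ell<J.\]
For $i<\ell$, the element $\pi_\ell(x)$ lies in the image of $\pi_\ell$, so the absorbing part of property (3) (applied to class $\ell$, using $r^{(i)}_1\le r^{(\ell)}_{n_\ell}$) gives $\pi_i(\pi_\ell(x))=\pi_\ell(x)$; conversely, from $x\le\pi_i(x)\le\pi_\ell(x)$ and the first part of property (3) (which makes $\pi_\ell$ constant on the order-interval from $x$ up to $\pi_\ell(x)$) one gets $\pi_\ell(\pi_i(x))=\pi_\ell(x)$.

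With this identity in hand, associativity reduces to a short case analysis according to how many of $a,b,c$ lie in the top class $J$ and the relative order of the remaining classes. When all three, or exactly two, lie in $J$, both sides collapse to $r^{(J)}_{n_J}$ via property (1) and absorption. When exactly one lies in $J$, each bracketing evaluates to a composite of push-up maps (with possibly one top) applied to that top-class element, and the law $\pi_i\pi_\ell=\pi_{\max(i,\ell)}$ shows the two composites coincide. I expect the genuinely non-trivial case to be the one in which the top-class element is combined first on one side but last on the other while the two remaining elements lie in \emph{distinct} lower classes $i\ne\ell$: there one side reads $\pi_\ell(\pi_i(a))$ and the other $\pi_\ell(a)$, and it is precisely the monotonicity-plus-idempotence half of the composition law that is required. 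That is the step I would write out in full.

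Finally, for the complexity I would verify $\mathrm{arch}(R')\le 2$ directly: given $r_0\le r_1\le r_2$ with $r_2$ in the top class $J$, the product $w=r_1\oplus r_2$ is either $r^{(J)}_{n_J}$ (if $r_1$ is in class $J$) or lies in the image of some $\pi_i$ (if $r_1$ is in a lower class $i$, whence $r_0\le r_1\le r^{(i)}_{n_i}$), and in both cases the absorbing property yields $r_0\oplus w=w$, which is exactly the complexity-$2$ identity $r_0\oplus r_1\oplus r_2=r_1\oplus r_2$. That $\mathrm{arch}(R')\neq 1$ follows from the hypothesis that some $n_i>1$: taking $r_0=r_1=r^{(i)}_1$, property (1) gives $r^{(i)}_1\oplus r^{(i)}_1=r^{(i)}_{n_i}>r^{(i)}_1$, so the complexity-$1$ identity fails. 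Hence $\mathrm{arch}(R')=2$, completing the proof.
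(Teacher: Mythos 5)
Your proof is correct, and at bottom it is the same verification the paper carries out -- a case analysis on which Archimedean classes the three operands occupy, using only the three properties of Lemma~\ref{lemma1} -- but your organization is genuinely tighter. The paper grinds through all thirteen orderings of the class indices and, inside its Cases 9 and 10, re-derives by a sandwich argument precisely the identity you isolate once and for all as the composition law $\pi_i\circ\pi_\ell=\pi_{\max(i,\ell)}$; its two halves are exactly the absorption clause of property (3) (for $i<\ell$, giving $\pi_i\circ\pi_\ell=\pi_\ell$) and the interval-constancy clause combined with the monotonicity $\pi_i(x)\le\pi_\ell(x)$ (giving $\pi_\ell\circ\pi_i=\pi_\ell$). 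All of your supporting facts check out against properties (1)--(3): $\kappa(x\oplus y)=\max(\kappa(x),\kappa(y))$, independence of the product from the choice of lower-class representative, idempotence of $\pi_i$, and absorption at class tops (for the latter, note you implicitly take $i=j-1$ in property (3) so that $r\le r^{(j-1)}_{n_{j-1}}$ covers every element below class $j$). Your complexity argument also matches the paper's: absorption gives $\mathrm{arch}(R')\le 2$, and $r^{(i)}_1\oplus r^{(i)}_1=r^{(i)}_{n_i}>r^{(i)}_1$ for some $i$ with $n_i>1$ rules out complexity $1$. The one step you deferred, $\pi_\ell(\pi_i(a))=\pi_\ell(a)$ for $i<\ell$, is exactly the chain of inequalities the paper writes out in its Case 9, so nothing is missing. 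What your packaging buys is that the thirteen cases collapse to four short computations with push-up maps; what the paper's buys is that each case is checked directly against the raw properties with no intermediate abstraction.
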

    \begin{proof}
        First, we prove that $R^{\prime}$ is a distance monoid. It suffices to check that the addition is associative, that is,
        \begin{equation}
    \left(r^{(i)}_{s} \oplus r^{(j)}_{t}\right) \oplus r^{(k)}_{\ell}
    = r^{(i)}_{s} \oplus \left(r^{(j)}_{t} \oplus r^{(k)}_{\ell}\right)
    \tag{$\dagger$}\label{asso}
\end{equation}
        where $s\in [n_{i}],\,t\in [n_{j}]$ and $\ell\in [n_{k}]$.

        We proceed by case analysis on the indices $i,j,k$. There are 13 cases to consider: $i=j=k,\,i=j<k,\,k<i=j,\,i=k<j,\,j<i=k,\,i<j=k,\,j=k<i,\,i<j<k,\,i<k<j,\,j<i<k,\,j<k<i,\,k<i<j,\,k<j<i$.
        \begin{enumerate}[label=Case \arabic* :, leftmargin=*]
        \item $i=j=k$.\\
            By property (1) of {Lemma \ref{lemma1}}, we have
            \begin{align*}
                &\left(r^{(i)}_{s}\oplus r^{(i)}_{t}\right)\oplus r^{(i)}_{\ell}=r^{(i)}_{n_{i}}\oplus r^{(i)}_{\ell}=r^{(i)}_{n_{i}},\\
                &r^{(i)}_{s}\oplus \left(r^{(i)}_{t}\oplus r^{(i)}_{\ell}\right)=r^{(i)}_{s}\oplus r^{(i)}_{n_{i}}=r^{(i)}_{n_{i}}.
            \end{align*}
            Thus, equation \eqref{asso} holds in this case.
        \item $i=j<k$.\\
            By properties (1) and (2) of {Lemma \ref{lemma1}}, we have
            \begin{align*}
                &\left(r^{(i)}_{s}\oplus r^{(i)}_{t}\right)\oplus r^{(k)}_{\ell}
                =r^{(i)}_{n_i}\oplus r^{(k)}_{\ell}
                =r^{(k)}_{u},
            \end{align*}
            for some $u\geq \ell$. By property (2) of {Lemma \ref{lemma1}}, we have also $r^{(i)}_{t}\oplus r^{(k)}_{\ell}=r^{(i)}_{n_i}\oplus r^{(k)}_{\ell}=r^{(k)}_{u}$. And by property (3) of {Lemma \ref{lemma1}}, $r^{(i)}_{s}\oplus r^{(k)}_{u}=r^{(k)}_{u}$, it implies \[r^{(i)}_{s}\oplus \left(r^{(i)}_{t}\oplus r^{(k)}_{\ell}\right)=r^{(k)}_{u}.\]
            Thus, equation \eqref{asso} holds in this case.
        \item $k<i=j$.\\
            By property (1) of {Lemma \ref{lemma1}}, we have
            \begin{align*}
                &\left(r^{(i)}_{s}\oplus r^{(i)}_{t}\right)\oplus r^{(k)}_{\ell}
                =r^{(i)}_{n_i}\oplus r^{(k)}_{\ell}
                =r^{(i)}_{n_i}.
            \end{align*}
            By property (2) of {Lemma \ref{lemma1}}, we have also $r^{(i)}_{t}\oplus r^{(k)}_{\ell}=r^{(i)}_{u}$ for some $u \geq t$. And by property (1) of {Lemma \ref{lemma1}}, $r^{(i)}_{s}\oplus r^{(i)}_{u}=r^{(i)}_{n_i}$. It implies
            \[r^{(i)}_{s}\oplus \left(r^{(i)}_{t}\oplus r^{(k)}_{\ell}\right)=r^{(i)}_{n_i}.\]
            Thus, equation \eqref{asso} holds in this case.
        \item $i=k<j$.\\
            By properties (2) and (3) of {Lemma \ref{lemma1}}, we have
            \begin{align*}
                &\left(r^{(i)}_{s}\oplus r^{(j)}_{t}\right)\oplus r^{(i)}_{\ell}
                =r^{(j)}_{u}\oplus r^{(i)}_{\ell}
                =r^{(j)}_{u},
            \end{align*}
            for $u\geq t$. And by properties (2) and (3) of {Lemma \ref{lemma1}}, we have
            \[r^{(i)}_{s}\oplus \left(r^{(j)}_{t}\oplus r^{(i)}_{\ell}\right)=r^{(i)}_{s}\oplus r^{(j)}_{u}=r^{(j)}_{u}.\]
            Thus, equation \eqref{asso} holds in this case.
        \item $j<i=k$.\\
            By properties (1) and (2) of {Lemma \ref{lemma1}}, we have
            \begin{align*}
                &\left(r^{(i)}_{s}\oplus r^{(j)}_{t}\right)\oplus r^{(i)}_{\ell}
                =r^{(i)}_{u}\oplus r^{(i)}_{\ell}
                =r^{(i)}_{n_i},
            \end{align*}
            where $u\geq s$. Similarly, by properties (1) and (2) of {Lemma \ref{lemma1}}, we have
            \[r^{(i)}_{s}\oplus \left(r^{(j)}_{t}\oplus r^{(i)}_{\ell}\right)=r^{(i)}_{s}\oplus r^{(i)}_{u}=r^{(i)}_{n_i}.\]
            Thus, equation \eqref{asso} holds in this case.
        \item $i<j=k$.\\
            This is equivalent to Case 3 since $R^{\prime}$ is commutative.
        \item $j=k<i$.\\
            This is equivalent to Case 2 since $R^{\prime}$ is commutative.
        \item $i<j<k$.\\
            By property (2) of {Lemma \ref{lemma1}}, we have 
            \begin{align*}
                \left(r^{(i)}_{s}\oplus r^{(j)}_{t}\right)\oplus r^{(k)}_{\ell}=r^{(j)}_{u}\oplus r^{(k)}_{\ell}=r^{(k)}_{v},
            \end{align*} 
            where $u\geq t$ and $v\geq \ell$.
            And by properties (2) and (3) of {Lemma \ref{lemma1}}, we have
            \[r^{(i)}_{s}\oplus \left(r^{(j)}_{t}\oplus r^{(k)}_{\ell}\right)=r^{(i)}_{s}\oplus r^{(k)}_{v}=r^{(k)}_{v}.\]
            Thus, equation \eqref{asso} holds in this case.
        \item $i<k<j$.\\
            By properties (2) and (3) of {Lemma \ref{lemma1}}, we have 
            \[r^{(i)}_{s}\oplus \left(r^{(j)}_{t}\oplus r^{(k)}_{\ell}\right)=r^{(i)}_{s}\oplus r^{(j)}_{v}=r^{(j)}_{v}\]
            for some $v\geq t$. By property (2) of {Lemma \ref{lemma1}}, we have
            $r^{(i)}_{s}\oplus r^{(j)}_{t}=r^{(j)}_{u}$ for some $t\leq u\leq v$. Notice that
            \[r^{(j)}_{u}\oplus r^{(k)}_{\ell}\geq r^{(j)}_{t}\oplus r^{(k)}_{\ell}= r^{(j)}_{v}.\]   
            And by property (3) of {Lemma \ref{lemma1}}, we have also
            \[r^{(j)}_{u}\oplus r^{(k)}_{\ell}\leq r^{(j)}_{v}\oplus r^{(k)}_{\ell}=r^{(j)}_{v}.\]
            So we have $r^{(j)}_{u}\oplus r^{(k)}_{\ell}=r^{(j)}_{v}$. It implies
            \begin{align*}
                \left(r^{(i)}_{s}\oplus r^{(j)}_{t}\right)\oplus r^{(k)}_{\ell}=r^{(j)}_{u}\oplus r^{(k)}_{\ell}=r^{(j)}_{v}.
            \end{align*} 
            Thus, equation \eqref{asso} holds in this case.
        \item $j<i<k$.\\
            By property (2) of {Lemma \ref{lemma1}}, we have 
            \begin{align*}
                \left(r^{(i)}_{s}\oplus r^{(j)}_{t}\right)\oplus r^{(k)}_{\ell}=r^{(i)}_{u}\oplus r^{(k)}_{\ell}=r^{(k)}_{v},
            \end{align*} 
            where $u\geq s$ and $v\geq \ell$. By property (2) of {Lemma \ref{lemma1}}, we have
            $r^{(j)}_{t}\oplus r^{(k)}_{\ell}=r^{(k)}_{w}$ for some $\ell\leq w\leq v$. Notice that by property (2) of {Lemma \ref{lemma1}}
            \[r^{(i)}_{s}\oplus r^{(k)}_{w}\geq r^{(i)}_{1}\oplus r^{(k)}_{\ell}= r^{(i)}_{u}\oplus r^{(k)}_{\ell}=r^{(k)}_{v}.\]   
            And by property (3) of {Lemma \ref{lemma1}}, we have also
            \[r^{(i)}_{s}\oplus r^{(k)}_{w}\leq r^{(i)}_{u}\oplus r^{(k)}_{v}=r^{(k)}_{v}.\]
            So we have $r^{(i)}_{s}\oplus r^{(k)}_{w}=r^{(k)}_{v}$. It implies
            \[r^{(i)}_{s}\oplus \left(r^{(j)}_{t}\oplus r^{(k)}_{\ell}\right)=r^{(i)}_{s}\oplus r^{(k)}_{w}=r^{(k)}_{v}.\]
            Thus, equation \eqref{asso} holds in this case.
        \item $j<k<i$.\\
            This is equivalent to Case 9 since $R^{\prime}$ is commutative.
        \item $k<i<j$.\\
            This is equivalent to Case 10 since $R^{\prime}$ is commutative.
        \item $k<j<i$.\\
            This is equivalent to Case 8 since $R^{\prime}$ is commutative.
        \end{enumerate}

        Then we prove that for all $r^{(i)}_{s}\leq r^{(j)}_{t}\leq r^{(k)}_{\ell}$ with $s\in [n_{i}],\,t\in [n_{j}]$ and $\ell\in [n_{k}]$, the following equation holds:
            \begin{equation}\label{arcom2}
                r^{(i)}_{s}\oplus r^{(j)}_{t}\oplus r^{(k)}_{\ell}=r^{(j)}_{t}\oplus r^{(k)}_{\ell}.
            \end{equation}
            There are only 4 cases: $i=j=k,\,i<j=k,\,i=j<k$ and $i<j<k$.
        \begin{enumerate}[label=Case \arabic* :, leftmargin=*]
        \item $i=j=k$\\
            By property (1) of {Lemma \ref{lemma1}}, we have
            \[
                r^{(i)}_{s}\oplus r^{(i)}_{t}\oplus r^{(i)}_{\ell}=r^{(i)}_{n_{i}}\oplus r^{(i)}_{\ell}=r^{(i)}_{n_{i}}=r^{(i)}_{t}\oplus r^{(i)}_{\ell}.
            \]

        \item $i<j=k$\\
            By properties (1) and (3) of {Lemma \ref{lemma1}}, we have
            \[
                r^{(i)}_{s}\oplus r^{(j)}_{t}\oplus r^{(j)}_{\ell}=r^{(i)}_{s}\oplus r^{(j)}_{n_{j}}=r^{(j)}_{n_{j}}=r^{(j)}_{t}\oplus r^{(j)}_{\ell}.
            \]
        \item $i=j<k$\\
            By properties (1) and (2) of {Lemma \ref{lemma1}}, we have
            \[
                r^{(i)}_{s}\oplus r^{(i)}_{t}\oplus r^{(k)}_{\ell}=r^{(i)}_{n_{i}}\oplus r^{(k)}_{\ell}=r^{(i)}_{t}\oplus r^{(k)}_{\ell}.
            \]
        \item $i<j<k$\\
            By property (2) of {Lemma \ref{lemma1}}, we can suppose $r^{(j)}_{t}\oplus r^{(k)}_{\ell}=r^{(k)}_{u}$ with $\ell\leq u\leq n_k$. Then by property (3) of {Lemma \ref{lemma1}}, we have
            \[
                r^{(i)}_{s}\oplus r^{(j)}_{t}\oplus r^{(k)}_{\ell}=r^{(i)}_{s}\oplus r^{(k)}_{u}=r^{(k)}_{u}=r^{(j)}_{t}\oplus r^{(k)}_{\ell}.
            \]
        \end{enumerate}
            
        This completes the verification of equation \eqref{arcom2}. Consequently, we have $\mathrm{arch}(R^{\prime})\leq 2$. Moreover, since at least one $n_i$ is greater than 1, it follows that $\mathrm{arch}(R^{\prime})=2$. 
    \end{proof}
    By establishing the necessary and sufficient conditions for a distance magma to be a distance monoid of Archimedean complexity 2, Lemmas \ref{lemma1} and \ref{lemma2} allow us to bypass the direct verification of the associative law of addition in the enumeration. The next lemma provides the tools of enumeration.
    \begin{lemma}\label{lemma3}
    We have the following counting results: 
    \begin{enumerate}
        \item For $n\geq 1$, we define \[A(n)=\{(a_i)_{i=1}^n\in [n]^n\colon a_i\geq i,\,\forall i\in [n] \text{ and } a_m=a_i,\,\forall i\leq m\leq a_i\},\] 
        where $[n]$ denotes the set $\{1,2,\cdots,n\}$. Then $|A(n)|=2^{n-1}$.
        \item We define a partial order $\leq $ on $A(n)$ as below: 
        \[\forall a,a^{\prime}\in A(n),\,a\leq  a^{\prime}\,\iff \,\,a_i\leq a^{\prime}_i,\,\forall i\in [n].\]
        For $k\geq 1,$ we define \[A_{k}(n)=\left\{(a^{(i)})_{i=1}^k\in (A(n))^k\colon a^{(k)}\leq \cdots \leq a^{(1)}\right\}.\] Then $|A_{k}(n)|=(k+1)^{n-1}$.
    \end{enumerate}
    \end{lemma}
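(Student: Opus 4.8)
The plan is to reinterpret the set $A(n)$ combinatorially as the set of compositions of $n$ (equivalently, as the subsets of $[n-1]$), and to reinterpret the partial order as reverse refinement of the associated interval partitions; both counts then become elementary.

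\textbf{Part (1).} First I would observe that the defining conditions force each $a\in A(n)$ to be constant on a system of consecutive intervals whose right endpoints are exactly the values taken by $a$: if $a_i=j$ then $a_m=j$ for all $i\le m\le j$, so the indices break into maximal ``blocks'' on which $a$ equals the right endpoint of the block. I would encode this by the map $\Phi\colon A(n)\to\mathcal P([n-1])$ given by $\Phi(a)=\{i\in[n-1]\colon a_i=i\}$, i.e.\ the set of internal right endpoints (``cut points''). The key step is to exhibit an explicit inverse: given $C\subseteq[n-1]$, set $a_i=\min(\{j\ge i\colon j\in C\}\cup\{n\})$ and verify directly that this sequence lies in $A(n)$ and that $\Phi$ recovers $C$. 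Since $a_n=n$ is always forced by $n\le a_n\le n$, the only free data is the subset $C\subseteq[n-1]$, giving $|A(n)|=2^{n-1}$. (Alternatively, splitting off the first block $\{1,\dots,a_1\}$ yields the recurrence $f(n)=\sum_{j=0}^{n-1}f(j)$ with $f(0)=1$, whence $f(n)=2f(n-1)=2^{n-1}$.)

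\textbf{Part (2).} The crucial intermediate result is that the componentwise order on $A(n)$ matches reverse inclusion of cut sets: for $a,a'\in A(n)$ one has $a\le a'$ if and only if $\Phi(a')\subseteq\Phi(a)$. The forward direction is immediate, since $a_i\le a'_i$ together with $a'_i=i$ and $a_i\ge i$ forces $a_i=i$. For the converse I would fix an index $i$ and set $q=a'_i$; if $q<n$ then $q\in\Phi(a')\subseteq\Phi(a)$, so $q$ is a cut of $a$ as well, and since $i\le q$ the block of $a$ containing $i$ must end at or before $q$, giving $a_i\le q=a'_i$ (the case $q=n$ being trivial). I expect this equivalence to be the main obstacle, as it requires translating a pointwise numerical inequality into a statement about refinement of interval partitions in both directions.

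Granting the equivalence, a chain $a^{(k)}\le\cdots\le a^{(1)}$ in $A(n)$ corresponds under $\Phi$ to a nested flag of subsets $\Phi(a^{(1)})\subseteq\Phi(a^{(2)})\subseteq\cdots\subseteq\Phi(a^{(k)})\subseteq[n-1]$. Counting these is then a routine independence argument: each of the $n-1$ elements of $[n-1]$ independently chooses the least index $t\in\{1,\dots,k\}$ at which it first enters the flag, or else never enters it, giving $k+1$ options per element and hence $(k+1)^{n-1}$ flags in total. This yields $|A_k(n)|=(k+1)^{n-1}$, as claimed.
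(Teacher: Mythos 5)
Your proposal is correct and follows essentially the same route as the paper: both identify $A(n)$ with subsets via the fixed-point set $\{i : a_i = i\}$ and identify chains in $A_k(n)$ with nested flags of such subsets, the only cosmetic differences being that you work with subsets of $[n-1]$ rather than subsets of $[n]$ containing $n$, and you count flags by letting each element independently choose its entry time rather than summing multinomial coefficients. Your write-up actually supplies two details the paper leaves implicit, namely the explicit inverse of the bijection and the verification that the componentwise order on $A(n)$ corresponds to reverse inclusion of fixed-point sets.
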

    \begin{proof}
        (1) We have a bijection between $A(n)$ and the subsets of $[n]$ containing $n$ as follows:  
        \[a=(a_i)_{i=1}^n \longleftrightarrow \{i\in [n]\colon a_i=i\}.\]
        Thus, $|A(n)|=2^{n-1}.$\\
        (2) Considering the partial order $\subset$ on the subsets of $[n]$, we have a bijection between $A_{k}(n)$ and a $k$-chain of subsets of $[n]$ containing $n$: 
        \[a^{(k)}\leq \cdots \leq a^{(1)} \longleftrightarrow \{S_{1}\subset \cdots \subset S_{k}\},\]
        where $S_{i}=\left\{j\in [n]\colon a^{(i)}_j=j\right\}$ for all $i\in [k]$. Then we can see
        \begin{align*}
            |A_{k}(n)|&=\sum_{n_{1}+\cdots+n_{k+1}=n-1,n_{i}\in \mathbb{N}}
            \binom{n-1}{n_1}\binom{n-1-n_1}{n_2}\cdots\binom{n-1-\sum_{i=1}^{k-1}n_{i}}{n_{k}}\\
            &=\sum_{n_{1}+\cdots+n_{k+1}=n-1,n_{i}\in \mathbb{N}}\frac{(n-1)!}{n_1!\cdots n_{k+1}!}=(k+1)^{n-1}. \qedhere
        \end{align*}
    \end{proof}
    With the preparation work above, now we can give a proof of Theorem \ref{DM(n,2)} by establishing a one-to-one correspondence.
    \begin{proof}[Proof of {Theorem \ref{DM(n,2)}}]
        By {Lemma \ref{lemma2}}, a distance magma $R$ is a distance monoid with complexity $2$ if and only if it has a decomposition of Archimedean classes such that one class has at least two elements and its addition satisfies the three properties in {Lemma \ref{lemma1}}. Furthermore, the structure of $R$ is completely determined by its addition operation. Therefore, it suffices to count the number of admissible addition operations.
        
        Since the addition within each class $\big\{r^{(i)}_{1},\cdots,r^{(i)}_{n_i}\big\}$ is determined by property (1) of {Lemma \ref{lemma1}}, we only need to consider the addition between elements from different classes i.e. between
        $\big\{r^{(i)}_{1},\cdots,r^{(i)}_{n_i}\big\}$ and $\big\{r^{(j)}_{1},\cdots,r^{(j)}_{n_j}\big\}$ for $i\neq j$. By property (2) of {Lemma \ref{lemma1}}, we only need to consider $r^{i}_{n_{i}}\oplus r^{j}_{1},\cdots, r^{(i)}_{n_{i}}\oplus r^{(j)}_{n_j}$ for $i\in [j-1]$. 
        Assume $a^{ij}_{k}\in [n]$ is the index of $r^{(i)}_{n_{i}}\oplus r^{(j)}_{k}$. It satisfies
        \begin{itemize}
            \item $a^{ij}_k\geq k$ for $k\in [n_j]$,
            \item $a^{ij}_k=a_{m}^{ij}$ if $k\leq m \leq a^{ij}_k$.
        \end{itemize}
        
        Following the notation of {Lemma \ref{lemma3}}, we can see\[\left(a^{ij}_k\right)_{1\leq k\leq n_j}\in A(n_j) \text{ and }\left(\left(a^{ij}_k\right)_{1\leq k\leq n_j}\right)_{1\leq i\leq j-1}\in A_{j-1}(n_j).\]
        Finally, by {Lemma \ref{lemma3}}, we obtain
        \[DM(n,2)=\sum_{k=1}^{n-1}\sum\limits_{\substack{n_{1}+\cdots+n_{k}=n \\ n_{i}\in \mathbb{N}_{>0}}}\prod\limits_{j=1}^k j^{n_j-1}. \qedhere \] 
    \end{proof} 
    \begin{remark}
        The most difficult part of computing $DM(n)$ or $DM(n,k)$ lies in verifying the associative law of addition. Lemma \ref{lemma1} and Lemma \ref{lemma2} allow us to circumvent this fundamental difficulty, although the method does not generalize to arbitrary cases.
    \end{remark}

    This theorem thus confirms a conjecture of Conant concerning the order of $DM(n)$.
    \begin{corollary}[{[\cite{Conant2015}, Conjecture 5.1.8(b)]}]
        \[DM(n)=\Omega(b^n), \,\forall b>0,\] where $f(n)=\Omega(g(n))$ means that there are $c\in \mathbb{R}_{>0}$ and $n_0\in \mathbb{N}_{>0}$ such that $f(n)\geq cg(n)$ for all $n>n_0$.
    \end{corollary}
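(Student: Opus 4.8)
The plan is to deduce the corollary from the exact formula in Theorem~\ref{DM(n,2)} together with the trivial inequality $DM(n)\ge DM(n,2)$, which holds because distance monoids of Archimedean complexity $2$ form a subclass of all distance monoids on $n$ non-zero elements. It therefore suffices to show that $DM(n,2)$ alone grows faster than every exponential.

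First I would observe that every summand in the formula
\[DM(n,2)=\sum_{k=1}^{n-1}\sum_{\substack{n_1+\cdots+n_k=n\\ n_i\in\mathbb{N}_{>0}}}\prod_{j=1}^k j^{\,n_j-1}\]
is a positive integer, so $DM(n,2)$ is bounded below by any single term I choose to isolate. For a fixed $k$ with $1\le k\le n-1$, I would select the composition $n_1=\cdots=n_{k-1}=1$ and $n_k=n-k+1$; since $j^{\,n_j-1}=1$ for $j<k$ while the last factor equals $k^{\,n-k}$, this term is exactly $k^{\,n-k}$. Hence $DM(n,2)\ge k^{\,n-k}$ for every admissible $k$.

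Given an arbitrary $b>0$, I would then fix an integer $k\ge 2$ with $k>b$ (for instance $k=\lceil b\rceil+1$). For all $n>k$ the previous bound yields
\[DM(n)\ \ge\ DM(n,2)\ \ge\ k^{\,n-k}\ =\ k^{-k}\,k^{\,n}\ \ge\ k^{-k}\,b^{\,n},\]
so with $c=k^{-k}>0$ and $n_0=k$ we obtain $DM(n)\ge c\,b^n$ for all $n>n_0$, which is precisely $DM(n)=\Omega(b^n)$. The only point requiring care is that the constant $c$ and threshold $n_0$ be chosen uniformly from $b$, and the explicit term $k^{\,n-k}$ supplies both at once; no genuine obstacle arises.

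Finally I would record the conceptual reason for this super-exponential growth, namely the identity $DM(n,2)=B_n-1$, where $B_n$ is the $n$-th Bell number. This follows by adding back the single omitted $k=n$ term (the all-singletons composition, contributing $1$) and recognizing that $\sum_{k=1}^{n}\sum_{n_1+\cdots+n_k=n}\prod_{j=1}^k j^{\,n_j-1}=B_n$. The latter admits a bijective proof via the block-opening ``phases'' of a set partition of $[n]$: reading $1,2,\dots,n$ in increasing order, the $j$-th phase begins when the $j$-th block is opened, and each of its remaining $n_j-1$ elements independently joins one of the $j$ blocks available so far, contributing the factor $j^{\,n_j-1}$; summing over all compositions and all $k$ enumerates every set partition exactly once. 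Since $B_n^{1/n}\to\infty$, this gives an alternative, more structural route to the same conclusion, and it is the connection emphasized in the introduction.
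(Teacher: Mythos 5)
Your proof is correct, but it takes a genuinely different and more elementary route than the paper's. The paper first proves the identity $DM(n,2)=B_n-1$ by a generating-function computation (recognizing $x^k\prod_{j=1}^{k}(1-jx)^{-1}$ as the generating function of the Stirling numbers $S(n,k)$ and summing over $k$), and then imports de Bruijn's asymptotic expansion of $\ln B_n$ to conclude that $B_n$ outgrows every exponential. You instead isolate the single composition $n_1=\cdots=n_{k-1}=1$, $n_k=n-k+1$ inside the formula of Theorem~\ref{DM(n,2)}, which gives the self-contained bound $DM(n,2)\ge k^{\,n-k}$ for every admissible $k\le n-1$; choosing an integer $k>b$ then yields $DM(n)\ge k^{-k}b^{n}$ with explicit constants $c=k^{-k}$ and $n_0=k$, and no external asymptotics are needed. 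Your bijective derivation of $DM(n,2)=B_n-1$ via the block-opening phases of a set partition is also valid (and arguably more transparent than the generating-function manipulation), but it is decorative for the purposes of this corollary. What the paper's route buys is the sharper quantitative information on $\ln B_n$ and the clean Bell-number identity that motivates the conjecture stated immediately after the corollary; what yours buys is brevity and independence from the literature on Bell-number asymptotics.
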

    \begin{proof}
        Actually, {Theorem \ref{DM(n,2)}} implies $DM(n,2)=B_n - 1$, where $B_n$ is the Bell number, i.e. the number of partitions of a set of $n$ elements. Notice that for a fixed $k$, we have
    \begin{align*}
        \prod_{j=1}^{k}\frac{1}{1-jx}= \sum_{m=0}^{\infty}\left(\sum_{\substack{m_{1}+\cdots+m_{k}=m \\ m_{i}\in \mathbb{N}}}
        \prod_{j=1}^{k}j^{m_j}\right)x^m.
    \end{align*}
    It implies that $DM(n,2)+1$ is the coefficient of $x^n$ of $\sum_{k=1}^{\infty}x^k\prod_{j=1}^{k}\frac{1}{1-jx}$. 
    Notice that $x^k\prod_{j=1}^{k}\frac{1}{1-jx}$ is also the generating function of the second kind of Stirling number, i.e.
    \[x^k\prod_{j=1}^{k}\frac{1}{1-jx}=\sum_{n=k}^{\infty}S(n,k)x^n.\]
    It yields
    \begin{align*}
        \sum_{k=1}^{\infty}x^k\prod_{j=1}^{k}\frac{1}{1-jx}
        &=\sum_{k=1}^{\infty}\sum_{n=k}^{\infty}S(n,k)x^n\\
        &=\sum_{n=1}^{\infty}\left(\sum_{k=1}^{n}S(n,k)\right)x^n\\
        &=\sum_{n=1}^{\infty}B_nx^n.
    \end{align*}
    So we have $DM(n,2)=B_n - 1$. 
    
    de Bruijn [\cite{de2014asymptotic}] derived the following asymptotic estimate for the Bell number $B_n$: 
    \[\frac{\ln B_n}{n}=\ln n-\ln\ln n-1+\frac{\ln\ln n}{\ln n}+\frac{1}{\ln n}+\frac{1}{2}\left(\frac{\ln\ln n}{\ln n}\right)^2+O\left(\frac{\ln\ln n}{(\ln n)^2}\right).\]
    In particular, we conclude that for every $\varepsilon>0$ there exists $n_0 = n_0(\varepsilon)$ such that, for all $n > n_0$,
    \[\left(\frac{n}{e\ln n}\right)^n<B_n<\left(\frac{n}{e^{1-\varepsilon}\ln n}\right)^n.\]
    It implies $DM(n)\geq DM(n,2)=B_n - 1=\Omega(b^n), \,\forall b>0$.
    \end{proof}
    Motivated by the equality $DM(n,2) = B_n - 1$, we are led to the following conjecture.
    \begin{conjecture}
        There exists a natural bijection between the set partitions of $[n]$ and the distance monoids with $n$ non-zero elements that have Archimedean complexity $1$ or $2$.
    \end{conjecture}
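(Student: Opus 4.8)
The plan is to upgrade the numerical identity $DM(n,1)+DM(n,2)=B_n$ into a block-by-block bijection by comparing both sides with the classical restricted-growth model of set partitions. First I would refine the count behind Theorem~\ref{DM(n,2)}: its proof shows that a distance monoid of complexity $1$ or $2$ with $k$ Archimedean classes is encoded by a composition $n_1+\cdots+n_k=n$ together with, for each $j$, an element of $A_{j-1}(n_j)$, and summing $\prod_{j=1}^k j^{n_j-1}$ over all compositions with $k$ parts gives exactly the identity $S(n,k)=\sum_{n_1+\cdots+n_k=n}\prod_{j=1}^{k}j^{n_j-1}$ extracted in the Corollary. Hence monoids of complexity $1$ or $2$ with $k$ Archimedean classes are equinumerous with partitions of $[n]$ into $k$ blocks, the unique complexity-$1$ monoid being the case $k=n$ (all singleton classes).

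Next I would make the bijection explicit through restricted growth strings. A partition of $[n]$ into $k$ blocks corresponds to its restricted growth string $w=w_1\cdots w_n\in[k]^n$, where $w_i$ records the block (indexed by order of first appearance) of $i$; the \emph{record} positions $p_1=1<p_2<\cdots<p_k$ are the block minima, and every non-record position lying after exactly $j$ records is free to take any value in $[j]$. Writing $n_j=p_{j+1}-p_j$ (with $p_{k+1}=n+1$) decomposes $[n]$ into $k$ segments, and the non-record choices in segment $j$ form a function $[n_j-1]\to[j]$, giving $\prod_{j} j^{n_j-1}$ strings with fixed record gaps. I would send a monoid $R$ to the partition whose record gaps are the Archimedean class sizes $n_1,\dots,n_k$ of $R$, and whose segment-$j$ data is obtained from the inter-class sums $r^{(i)}_{n_i}\oplus r^{(j)}_m$ ($1\le i<j$) via the chain-of-subsets description of $A_{j-1}(n_j)$ used in the proof of Lemma~\ref{lemma3}(2), under which $A_{j-1}(n_j)$ is identified with the functions $[n_j-1]\to[j]$.

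To see this is a bijection I would argue segment by segment: the class sizes of $R$ and the record gaps of $w$ carry the same composition, Lemma~\ref{lemma3}(2) already supplies a bijection between the inter-class data of class $j$ and the segment-$j$ non-record assignments, and assembling these over $j$ produces a map whose inverse is defined in the same way (read the composition and the segment functions off the partition, then rebuild the addition using Lemmas~\ref{lemma1} and \ref{lemma2}). Since the fibers over each $k$ have equal cardinality $S(n,k)$ and the construction is invertible, the global map is a bijection matching complexity-$1$ monoids with the all-singletons partition and complexity-$2$ monoids with partitions into fewer than $n$ blocks. Crucially, Lemma~\ref{lemma2} guarantees that every choice of segment data yields an associative structure, so no cross-segment compatibility conditions intervene and the correspondence genuinely factors as a product over $j$.

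The hard part will be pinning down a single canonical identification of the inter-class data with the restricted-growth choices, so that the resulting bijection deserves to be called \emph{natural} rather than a mere counting coincidence. The subtlety is bookkeeping: one must fix, once and for all, how the ordered non-top elements $r^{(j)}_1<\cdots<r^{(j)}_{n_j-1}$ of a class correspond to the non-record positions of a segment, and how the level $i$ at which adding $r^{(i)}$ first collapses $r^{(j)}_m$ to the top of its class corresponds to the block selected by a restricted-growth letter; making the two directions mutually inverse forces specific and somewhat rigid conventions, and it is here that the monotonicity built into $A_{j-1}(n_j)$ must be reconciled with the nesting $S_1\subset\cdots\subset S_{j-1}$ of blocks. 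A secondary difficulty is that the conjecture's word ``natural'' is not formally specified, so part of the task is to isolate a property---for instance compatibility with the refinement by number of Archimedean classes (equivalently by number of blocks), and ideally with restriction to initial Archimedean segments---that the constructed bijection provably satisfies and that distinguishes it among the $B_n!$ set-theoretic bijections.
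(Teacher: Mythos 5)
This statement is left as an open conjecture in the paper: the authors only establish the numerical identity $DM(n,2)=B_n-1$ (hence $DM(n,1)+DM(n,2)=B_n$) via generating functions in the corollary following Theorem~\ref{DM(n,2)}, and offer no bijection. Your proposal therefore goes beyond the paper rather than paralleling it, and its combinatorial skeleton is correct. The refined identity $S(n,k)=\sum_{n_1+\cdots+n_k=n}\prod_{j=1}^{k}j^{\,n_j-1}$ does follow from the paper's generating-function computation, and your restricted-growth-string model realizes it bijectively: partitions of $[n]$ into $k$ blocks with record positions $p_1<\cdots<p_k$ and gaps $n_j=p_{j+1}-p_j$ are counted by exactly the same product, segment by segment, as the monoid data $\bigl(a^{ij}\bigr)_{i<j}\in A_{j-1}(n_j)$ from the proof of Theorem~\ref{DM(n,2)}; and the identification of $A_{j-1}(n_j)$ with chains $S_1\subset\cdots\subset S_{j-1}$ of subsets of $[n_j]$ containing $n_j$, hence with functions $[n_j-1]\to[j]$, is precisely Lemma~\ref{lemma3}(2) plus one standard step. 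Since Lemma~\ref{lemma2} guarantees every choice of segment data assembles into an associative structure, the product decomposition over $j$ is legitimate and the map is invertible, with the complexity-$1$ monoid landing on the all-singletons partition. What you have not yet done --- and what you candidly flag --- is fix the conventions that make the bijection canonical: which direction of the nesting $S_1\subset\cdots\subset S_{j-1}$ corresponds to which Archimedean class $i<j$, and how the ordering of $r^{(j)}_1<\cdots<r^{(j)}_{n_j-1}$ aligns with the non-record positions of segment $j$. This is genuine remaining work, but it is bookkeeping rather than a missing idea; the one conceptual caveat is that ``natural'' in the conjecture is informal, so any finished proof should also state which structural property (e.g.\ compatibility with the number of Archimedean classes versus the number of blocks, which your construction manifestly respects) certifies naturality.
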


    \section{Arithmetic progression and Archimedean complexity}\label{sec:arithmetic}
    In this section, we explore the relationship between Archimedean complexity and the existence of long arithmetic progressions. We show via a counterexample that high complexity does not necessarily imply long progressions. However, under a size condition, we prove that progressions of length equal to the complexity must exist.
    
    {Example} \ref{12567} illustrates a key point: a finite distance monoid of Archimedean complexity $k$ can be decomposed into Archimedean submonoids of Archimedean complexity strictly less than $k$. By applying [\cite{Conant2015}, Proposition 3.7.19] to an Archimedean submonoid, we see that the Archimedean complexity coincides exactly with the maximum length of an arithmetic progression within it. This leads to the following question.
    \begin{question}
        Let $R$ be a finite distance monoid of Archimedean complexity $k$. Is there a connection between $\mathrm{arch}(R)$ and the maximum length of an arithmetic progression (with distinct elements) i.e. $\{r< 2r<\cdots<kr\}\subseteq R$?
    \end{question}
    The answer is no since we have the following example.
    \begin{example}
    We construct a sequence of finite distance monoids $R_n$ by induction as follows. 
    Define $R_2=\{0,1,2\}$. Given $R_{n}=\{0<r_{1}<r_{2}<\cdots<r_{n}\}$,  define $R_{n+1}$ by appending new elements: $R_{n+1}=R_n\cup\{2r_{n}+1<2r_{n}+1+r_{1}<\cdots<2r_{n}+1+r_{n}\}$. The addition is defined in the natural way as in \eqref{add}, and one can verify by induction that it is associative. By construction, we have $\mathrm{arch}(R_n)=n$. However, the maximum length of an arithmetic progression in $R_n$ is only $2$, because every Archimedean submonoid of $R_n$ has Archimedean complexity at most $2$.
    \end{example}
    However, under the stronger condition $|R| > 1 + 4 \cdot \mathrm{arch}(R)$, we obtain the following theorem, which guarantees the existence of arithmetic progression of length $\mathrm{arch}(R)$.
    \begin{theorem}\label{arith pro}
            Fix $k\in \mathbb{N}_{>0}$. If $n>4k$, every distance monoid with $n$ non-zero elements and Archimedean complexity $n-k$ has an arithmetic progression $\{r<2r<\cdots<(n-k)r\}.$
    \end{theorem}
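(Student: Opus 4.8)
The plan is to prove that the hypothesis $n>4k$ forces the Archimedean complexity of $R$ to be realized \emph{inside a single Archimedean class}, so that the desired progression is supplied by Conant's description of the Archimedean case. First I would decompose $R\setminus\{0\}=\sqcup_i S_i$ into its Archimedean classes by Theorem~\ref{decomp}, and set $n_i=|S_i|$ and $c_i=\mathrm{arch}(S_i\cup\{0\})$. By [\cite{Conant2015}, Proposition 3.7.19] applied to each Archimedean submonoid, $c_i$ equals the maximal length of an arithmetic progression contained in $S_i$; since any progression $r<2r<\cdots$ lies in the single class of $r$, the longest progression in all of $R$ has length $L:=\max_i c_i$. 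Moreover, the $L$-fold chain $r\le r\le\cdots\le r$ built from a progression of length $L$ is a chain on which the smallest term cannot be dropped, so one always has $L\le\mathrm{arch}(R)$. Hence it suffices to prove the reverse inequality $\mathrm{arch}(R)\le L$ under $n>4k$: this pins down $L=\mathrm{arch}(R)=n-k$ and yields the progression $\{r<2r<\cdots<(n-k)r\}$.

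To control $\mathrm{arch}(R)$ I would reformulate it as the maximal length $t$ of a \emph{bad chain} $r_0\le r_1\le\cdots\le r_{t-1}$ with $r_0\oplus\cdots\oplus r_{t-1}\neq r_1\oplus\cdots\oplus r_{t-1}$, and track how the successive partial sums of such a chain pass through the ordered classes $S_1<\cdots<S_\ell$. Inside a fixed class the partial sums can advance through at most $c_i$ distinct values, and consecutive active classes share one transition step; this gives the upper bound $\mathrm{arch}(R)\le 1+\sum_i(c_i-1)$. In particular, if the maximal bad chain meets only one class then $\mathrm{arch}(R)=c_i\le L$ and we are finished. The entire problem therefore reduces to the case where the maximal bad chain genuinely visits at least two classes, i.e. where the complexity is produced by \emph{cross-class absorption} exactly as in Example~\ref{12567}, and the goal becomes to show that this configuration is incompatible with $n>4k$.

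The hard part is the quantitative cost of cross-class absorption, and this is where associativity enters decisively: it is not enough that a higher class merely have large complexity, for the bump coming from below must register \emph{consistently at every level} of the sum. For the accumulated sum of lower elements to push a higher class element to distinct values at two consecutive levels — which is precisely what makes the chain bad across a transition — associativity forces the higher class to contain a translated copy of the structure lying beneath it, as one sees in Example~\ref{12567} where $\{5,6,7\}=5+\{0,1,2\}$ is a translate of the lower submonoid. I would package this as a lemma asserting that each class into which the chain is bumped must contain, attached to the levels of its own progression, translated copies of the part of $R$ below it, so that the sizes $n_i$ grow at least geometrically up the tower of active classes while each such class contributes only $c_i-1$ to $\mathrm{arch}(R)$. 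Combining these forced lower bounds on $\sum_i n_i=n$ with the upper bound $\mathrm{arch}(R)\le 1+\sum_i(c_i-1)$ should yield $n\le 4k$ whenever two or more classes are active, contradicting the hypothesis and completing the argument. The delicate step — the one I expect to require the most care — is making the size blow-up tight enough to extract the exact constant $4$, since one must account precisely for the overlap between a class's own progression and the translated copies associativity compels it to carry.
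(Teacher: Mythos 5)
Your high-level plan---show that for $n>4k$ the complexity must be realized inside a single Archimedean class and then invoke Conant's Proposition 3.7.19---is reasonable, but both quantitative steps it rests on are unproven, and one of them is justified by a false claim. First, the asserted bound $\mathrm{arch}(R)\le 1+\sum_i(c_i-1)$: your justification that ``inside a fixed class the partial sums can advance through at most $c_i$ distinct values'' fails already in Example~\ref{12567}. There a witnessing bad chain is $1\le 1\le 5$, and its top-down partial sums $5<6<7$ are three distinct values lying in the class $\{5,6,7\}$, whose complexity is only $2$; the advances are driven by summands from the \emph{lower} class, so the complexity of the class containing the partial sums does not cap their number. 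What does follow from splitting a maximal bad chain $r_0\le\cdots\le r_{t-1}$ at class boundaries is only the weaker $\mathrm{arch}(R)\le\sum_i c_i$ (each block is itself a bad chain in its own class, by associativity and commutativity), and you would need either to prove your sharper form or to redo the arithmetic with the weaker one.

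Second, and more seriously, your entire multi-class case hangs on the unproved assertion that cross-class absorption forces each higher active class to contain translated copies of everything below it, with class sizes growing at least geometrically, and that this yields $n\le 4k$. No mechanism is offered for why associativity forces such copies (Example~\ref{12567} is one suggestive instance, not an argument), and you concede you do not know how to extract the constant $4$. This is precisely the quantitative heart of the theorem, and it is absent. For comparison, the paper's proof needs no such structural lemma: it takes a witnessing chain $r_0\le\cdots\le r_{n-k-1}$, uses Conant's Proposition 3.7.21 to produce $n-k$ strictly increasing partial sums all lying in a single Archimedean class $S_0$; if $r_0\in S_0$ the progression $r_0<2r_0<\cdots<(n-k)r_0$ falls out immediately, and otherwise at most $k$ of the $r_i$ lie below $S_0$ and an interleaving claim doubles the count to at least $2(n-2k)>n$ distinct elements of $S_0$, a contradiction. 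The counting is done directly on the partial sums of the bad chain, which is where I would steer your argument.
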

    \begin{proof}
    Let $S$ be a distance monoid with $n$ non-zero elements and $\mathrm{arch}(S)=n-k$. By the definition of Archimedean complexity, there exists a sequence 
    $r_0\leq r_1\leq \cdots \leq r_{n-k-1}\in S,$ such that 
    $$r_1\oplus\cdots \oplus r_{n-k-1}<r_0\oplus r_1\oplus\cdots \oplus r_{n-k-1}.$$
    Applying [\cite{Conant2015}, Proposition 3.7.21] to this sequence yields a chain of inequalities:
    \begin{equation}\label{eq arith}
        r_{n-k-1}<r_{n-k-1}\oplus r_{n-k-2}<\cdots <r_{n-k-1}\oplus r_{n-k-2}\oplus\cdots \oplus r_0.
    \end{equation} 
    And these $n-k$ elements are in the same Archimedean class, we denote it by $S_0$. And we denote the set of elements smaller than $r_{n-k-1}$ by $S_1$, the set of elements bigger than $r_{n-k-1}\oplus r_{n-k-2}\oplus\cdots \oplus r_0$ by $S_2$.

    Since $r_{n-k-1}\in S_0,$ we have $r_i\notin S_2,\,\forall 0\leq i\leq n-k-1$. If $r_0 \in S_0$, there exists $m\in \mathbb{N}_{>0}$ such that 
    $r_{n-k-1}\oplus r_{n-k-2}\oplus \cdots \oplus r_0\leq mr_0$. Since $r_{n-k-1}\oplus r_{n-k-2}\oplus\cdots \oplus r_0>r_{n-k-1}\oplus r_{n-k-2}\oplus \cdots \oplus r_1\geq (n-k-1)r_0$, we can see $m\geq n-k$. Then we find an arithmetic progression $\{r_0<2r_0<\cdots<(n-k)r_0\}$.

    So we may assume that $r_0,r_1,\cdots ,r_j\in S_1,\,0\leq j \leq n-k-2.$
    By \eqref{eq arith}, we have these $j+1$ different elements
    $$r_j<r_j\oplus r_{j-1}<\cdots <r_j\oplus r_{j-1}\oplus\cdots \oplus r_0\in S_1.$$
    Therefore, we have $j\leq k-1.$

    Assume $r=r_j\oplus r_{j-1}\oplus\cdots \oplus r_0\in S_1$. Then we have
    \begin{align*}
        r_{n-k-1}&<\cdots <r_{n-k-1}\oplus r_{n-k-2}\oplus\cdots \oplus r_{j+1}\\
        &<r_{n-k-1}\oplus r_{n-k-2}\oplus \cdots \oplus r_{j+1}\oplus r.
    \end{align*}

    \begin{claim}\label{claim}
        We have the following chain of inequalities:
        \begin{align*}
            r_{j+1}&<r\oplus r_{j+1}<r_{j+1}\oplus r_{j+2}<r\oplus r_{j+1}\oplus r_{j+2}\\
            &<\cdots<r\oplus r_{j+1}\oplus r_{j+2}\oplus\cdots \oplus r_{n-k-2}.
        \end{align*}
         
    \end{claim}

    \begin{claimproof}
    By \eqref{eq arith}, we only need to prove that
    $$r\oplus r_{j+1}\oplus\cdots \oplus r_m<r_{j+1}\oplus\cdots \oplus r_m\oplus r_{m+1},\,j+1\leq m\leq n-k-3.$$
    If not, we may have
    $$r\oplus r_{j+1}\oplus\cdots \oplus r_m=r_{j+1}\oplus\cdots \oplus r_m\oplus r_{m+1}$$ for some $m$. Since
    $$r\oplus r_{j+1}\oplus\cdots \oplus r_m\leq r_{j+1}\oplus r_{j+1}\oplus \cdots \oplus r_m\leq r_{j+1}\oplus \cdots \oplus r_m\oplus r_{m+1},$$ it implies
    \[r\oplus r_{j+1}\oplus\cdots \oplus r_m= r_{j+1}\oplus r_{j+1}\oplus \cdots \oplus r_m.\]
    So we obtain
    \begin{align*}
        &r_{j+1}\oplus r_{j+1}\oplus r_{j+1}\oplus\cdots \oplus r_m\\
        =\ &r_{j+1}\oplus (r_{j+1}\oplus r_{j+1}\oplus\cdots \oplus r_m)\\
        =\ &r_{j+1}\oplus (r\oplus r_{j+1}\oplus \cdots \oplus r_m)\\
        =\ &r\oplus (r_{j+1}\oplus r_{j+1}\oplus \cdots \oplus r_m)\\
        =\ &r\oplus (r\oplus r_{j+1}\oplus \cdots \oplus r_m)\\
        =\ &(r\oplus r)\oplus r_{j+1}\oplus \cdots \oplus r_m\\
        \leq\ & r_{j+1}\oplus r_{j+1}\oplus \cdots \oplus r_m,
    \end{align*}
    which means $$r_{j+1}\oplus \cdots \oplus r_m\oplus r_{j+1}=r_{j+1}\oplus\cdots \oplus r_m\oplus \ell\cdot r_{j+1}, \forall\ell\in\mathbb{N}_{>0}.$$
    
    Since $r_{j+1}, r_{m+1}, r_{m+2}$ are in the same Archimedean class, it yields
    \begin{align*}
        r_{j+1}\oplus \cdots \oplus r_{m+2}
        &\leq r_{j+1}\oplus\cdots \oplus r_m\oplus\ell\cdot r_{j+1}\\
        &=r_{j+1}\oplus\cdots \oplus r_m\oplus r_{j+1}\\
        &\leq r_{j+2}\oplus \cdots \oplus r_{m+2},
    \end{align*}
    which contradicts \eqref{eq arith}.
    \end{claimproof}
    By this claim, we have at least $2(n-k-j-1)\geq 2(n-2k)>n$ distinct elements in $S_0$, which is a contradiction.
\end{proof}

\section{$DM(n,n-k)$ with $n\gg k$}\label{sec:asymptotic}
    In this section, we study the asymptotic behavior of $DM(n,n-k)$ for fixed $k$
and large $n$. We establish a lower bound via explicit constructions and an upper bound by analyzing the structure of such monoids. We also provide an exact formula for $DM(n,n-2)$ in the end.

    We begin by constructing a lower bound for $DM(n,n-k)$ with $n\geq k+2$.
\begin{proposition}\label{lower bound for n-k}
    Fix $k\in \mathbb{N}_{>0}$. For $n\geq k+2$, 
    \[DM(n,n-k)\geq \binom{n-2}{k}.\]
\end{proposition}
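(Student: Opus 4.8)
The plan is to prove this lower bound by exhibiting an explicit, injective family of distance monoids indexed by the $k$-element subsets of $\{1,\dots,n-2\}$, so that $\binom{n-2}{k}$ appears simply as the number of such subsets. The guiding heuristic comes from Theorem~\ref{arith pro}: a monoid of complexity $n-k$ carries a maximal ``spine'', an additive chain of length $n-k$ as in \eqref{eq arith}, so that of the $n$ non-zero elements exactly $k$ lie off this spine. Since the smallest element (the generator of the spine) and the largest element (the absorbing top) are always on the spine, the $k$ off-spine ``defect'' elements must occupy $k$ of the $n-2$ internal positions. A choice of these positions is exactly a $k$-subset $J\subseteq\{1,\dots,n-2\}$, and I would attach to each $J$ a distance monoid $M_J$ on $n$ non-zero elements with $\mathrm{arch}(M_J)=n-k$.

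Concretely, I would build $M_J$ from its Archimedean decomposition (Theorem~\ref{decomp}), starting from the capped arithmetic progression $\{0<d<2d<\dots<(n-k)d\}$ equipped with the truncated addition $a\oplus b=\min(a+b,(n-k)d)$ of the type in \eqref{add}; this is the full monoid on $n-k$ elements, is automatically associative, and has complexity exactly $n-k$. I would then insert, at each position prescribed by $j\in J$, one defect element whose addition with every larger or equal element is forced upward in a single step, so that it merges into the spine without lengthening any additive chain. The design constraint is that each inserted element be \emph{absorbed after one step}: this guarantees $\mathrm{arch}(M_J)\le n-k$, while the untouched part of the spine still realizes a strictly increasing chain of length $n-k$, giving $\mathrm{arch}(M_J)\ge n-k$ by Proposition~3.7.21 of \cite{Conant2015}, and hence equality (using Proposition~3.7.19 of \cite{Conant2015} for the purely Archimedean pieces).

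The verification then splits into three steps of increasing difficulty. First, associativity of $\oplus$ on $M_J$: I would reduce to a finite case analysis on the relative positions of three elements (on-spine versus defect), in the exact spirit of the thirteen-case argument of Lemma~\ref{lemma2}, using that the spine is the associative truncated monoid and that each defect is absorbed in one step. Second, the complexity computation: the lower bound $\mathrm{arch}(M_J)\ge n-k$ is witnessed by the spine chain, while $\mathrm{arch}(M_J)\le n-k$ requires showing that no additive chain passing through a defect can exceed length $n-k$ — the mechanism being the same monotone-cancellation phenomenon used to rule out extra strict increases in Claim~\ref{claim}. Third, distinctness: different $J$ yield non-isomorphic $M_J$, since the order-positions of the idempotents (the Archimedean-class markers, via the ``$r\oplus r=r$'' criterion) together with the positions of the defect elements form an order-isomorphism invariant from which $J$ can be recovered.

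The main obstacle I anticipate is controlling associativity and the exact value of the complexity \emph{simultaneously} when defects sit at low internal positions. As Example~\ref{12567} and the inductive family of Section~\ref{sec:arithmetic} make clear, $\mathrm{arch}(R)$ is a genuinely global invariant, not read off from the individual Archimedean classes, so a low defect can interact with the spine through the multi-step relations of property (3) of Lemma~\ref{lemma1} and silently lengthen some additive chain. Making the insertion rules robust enough that $\oplus$ stays associative, that no defect ever enters a chain of length exceeding $n-k$, and that exactly one monoid arises per $k$-subset is the crux; I expect this to force each defect to be an element that is immediately bumped up into the spine, with the bulk of the remaining work being the combinatorial bookkeeping that confirms the resulting count is precisely $\binom{n-2}{k}$.
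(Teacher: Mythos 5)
Your overall strategy is the paper's: take a capped arithmetic progression of length $n-k$ as the ``spine'' and adjoin $k$ defect elements in the internal gaps, each absorbed into the spine after one addition, and observe that the configurations are counted by $\binom{n-2}{k}$. Your indexing by $k$-subsets of the $n-2$ internal order-positions is equivalent to the paper's indexing by weakly increasing tuples $i_1\leq\cdots\leq i_k$ in $[n-k-1]$ (both enumerate the weak compositions recording how many defects fall in each of the $n-k-1$ gaps). One small caveat on your framing: for a lower bound you only need to exhibit the family, so the appeal to Theorem~\ref{arith pro} as motivation is harmless but also inapplicable in the stated range ($n\geq k+2$ versus $n>4k$).

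The genuine gap is that you never actually define the monoids. ``Forced upward in a single step so that it merges into the spine'' is a list of desiderata, not an addition table: you do not specify $x\oplus \ell d$ or $x\oplus x'$ for defects $x,x'$, and you defer both associativity and the verification that $\mathrm{arch}(M_J)=n-k$ to an unexecuted case analysis that you yourself identify as the crux. This is precisely where the content of the proposition lies, and it is not routine --- as you note, a poorly chosen insertion rule can break associativity or lengthen a chain. The paper closes this gap with a concrete realization that makes all three verifications one-line computations: take $S=\{0,1,\dots,n-k\}\cup\{x_1,\dots,x_k\}\subset\mathbb{R}$ with $x_j=i_j+n^{-(k+1-j)}$ and $a\oplus b=\sup\{s\in S: s\leq a+b\}$. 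The strictly increasing fractional parts are chosen so that no defect $x_c$ can satisfy $x_c\leq a+b$ while $\lfloor x_c\rfloor=\lfloor a+b\rfloor$ for non-zero $a,b$ (since $i_c>i_a$ forces $c>\max(a,b)$ and hence $\epsilon_c>\epsilon_a+\epsilon_b$), so every such sum truncates to the integer $\min\{\lfloor a+b\rfloor,n-k\}$; associativity then reads off as $(a\oplus b)\oplus c=a\oplus(b\oplus c)=\min\{\lfloor a+b+c\rfloor,n-k\}$, and the complexity is immediate because every non-zero element is at least $1$. To complete your argument you would need to supply an explicit rule of this kind (or reproduce this one) and carry out the checks; without it the proposal is a plan rather than a proof.
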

\begin{proof}
    We construct a family of distance monoids as follows. Let the base set be \[S=\{0,1,2,\cdots,n-k\}\cup\{x_{1},\cdots,x_{k}\},\] where $x_{j}=i_{j}+{n^{-(k+1-j)}}$ for some $i_{j}\in [n-k-1]$ with $i_{1}\leq i_{2}\leq \cdots \leq i_{k}$. There are $\binom{n-2}{k}$ choices for $k$-tuple $(x_{1},\cdots,x_{k})$. Define the addition by \[a\oplus b=\sup_{s\leq a+b,\,s\in S}s,\]where $+$ is the natural addition in $\mathbb{R}$. To verify that $S$ is a finite distance monoid with $n$ non-zero elements and $\mathrm{arch}(S)=n-k$, it suffices to check that the addition is associative. 
    
    For all $1\leq i,j\leq k$ and $m\in \{0\}\cup [n-k]$, we have
    \begin{align*}
        &x_{i}\oplus x_{j}=\min\{\lfloor x_i+x_j\rfloor,n-k\},\\
        &m\oplus x_{i}=\min\{\lfloor m+x_i\rfloor,n-k\}.
    \end{align*}
    It yields
    \[(a\oplus b)\oplus c=a\oplus (b\oplus c)=\min\{\lfloor a+b+c\rfloor,n-k\},\,\forall a,b,c\in S.\]
    which means these $\binom{n-2}{k}$ distance monoids are exactly what we want.
\end{proof}

    Motivated by this proposition and $DM(n,n-1)=2n-2$, we propose Theorem \ref{DM(n,n-k)}. With the preparation work in Section \ref{sec:arithmetic}, we can give a proof of Theorem \ref{DM(n,n-k)}.
\begin{proof}[Proof of Theorem \ref{DM(n,n-k)}]
    The case $k=0$ is trivial since Conant proves that $DM(n,n)=1$ in [\cite{Conant2015}]. For $k\in \mathbb{N}_{>0}$ and sufficiently large $n$, let $S$ be a distance monoid with $n$ non-zero elements and Archimedean complexity $n-k$. By {Theorem \ref{arith pro}}, there exists an element $r_0\in S$, such that the arithmetic progression $\{r_0<2r_0<\cdots<(n-k)r_0\}$ is contained in $S$. 
    
    \begin{claim}\label{minmax}
        Suppose $r$ the smallest element of the Archimedean class containing $r_0$. Then $\{r<2r<\cdots<(n-k)r\}$ is an arithmetic progression in $S$. In addition, $(n-k)r$ is the largest element in this Archimedean class.
    \end{claim}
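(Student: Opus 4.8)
The plan is to identify the relevant Archimedean class, show that its minimal element generates the longest possible arithmetic progression, and then pin down the length of that progression.

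First I would let $S_0$ denote the Archimedean class of $S$ containing $r_0$, and observe that all of $r_0, 2r_0, \dots, (n-k)r_0$ lie in $S_0$: any two of these multiples are mutually Archimedean-comparable by positivity and monotonicity of $\oplus$. Set $r = \min S_0$; since $r \le r_0$, monotonicity gives $jr \le jr_0$ for every $j$. Because $S$ is finite, the sequence $r, 2r, 3r, \dots$ stabilizes, so I would let $p$ be minimal with $pr = (p+1)r$, giving $r < 2r < \cdots < pr$ and $\ell r = pr$ for all $\ell \ge p$. The key structural observation is that $pr = \max S_0$: for any $t \in S_0$ the definition of an Archimedean class supplies $N$ with $t \le Nr$, and if $N \le p$ then $t \le Nr \le pr$, while if $N > p$ then $Nr = pr$, so in either case $t \le pr$. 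Thus once the minimal element's progression stabilizes, it already equals the maximum of the class.

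Next I would show that $p$ equals the maximal length $L$ of an arithmetic progression inside $S_0$. By [\cite{Conant2015}, Proposition 3.7.19] applied to the Archimedean submonoid $S_0 \cup \{0\}$, this maximal length equals $\mathrm{arch}(S_0)$; moreover any arithmetic progression of length $\ell$ in $S$ forces $\mathrm{arch}(S) \ge \ell$, so $L \le n-k$, while the progression $\{r_0 < \cdots < (n-k)r_0\}$ obtained from Theorem \ref{arith pro} gives $L \ge n-k$, whence $L = n-k$. The inequality $p \le L$ is immediate, as $r, 2r, \dots, pr$ is itself a progression. For the reverse inequality I would argue by contradiction: assume $p < L$ and fix a maximal progression $s < 2s < \cdots < Ls$ in $S_0$. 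Since $r = \min S_0 \le s$, monotonicity gives $(L-1)r \le (L-1)s$; but $p \le L-1$ forces $(L-1)r = pr = \max S_0$, while $Ls \le \max S_0$ and $(L-1)s < Ls$, so that $(L-1)s < Ls \le \max S_0 = (L-1)r \le (L-1)s$, a contradiction. Hence $p = L = n-k$.

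Combining these facts, $\{r < 2r < \cdots < (n-k)r\}$ is an arithmetic progression in $S$, and $(n-k)r = pr = \max S_0$ is the largest element of the Archimedean class containing $r_0$, which is exactly the claim. The main obstacle is the inequality $p \ge L$: a purely local argument comparing $r$ and $s$ term by term does not suffice, and the decisive ingredient is the earlier observation that the minimal element's progression, upon stabilizing, already attains $\max S_0$, which is what allows the comparison $r \le s$ to propagate through the $(L-1)$-st multiples and produce the contradiction.
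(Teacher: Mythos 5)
Your proof is correct, and at its core it runs on the same fuel as the paper's: $r\le r_0$, monotonicity, and the fact that the multiples of $r$ stabilize and dominate the whole Archimedean class. The packaging differs, though. The paper argues directly: pick $m$ with $r_0\le mr$, deduce $(n-k)r\le (n-k)r_0\le m(n-k)r=(n-k)r$ (the last equality because $\mathrm{arch}(S)=n-k$ forces $\ell r=(n-k)r$ for $\ell\ge n-k$), hence $(n-k)r_0=(n-k)r$, and then $(n-k-1)r\le (n-k-1)r_0<(n-k)r_0=(n-k)r$ gives strictness; maximality of $(n-k)r$ follows from $s\le m'r\le (n-k)r$. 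You instead introduce the stabilization index $p$, prove $pr=\max S_0$, and then pin down $p=n-k$ by comparing with the maximal progression length $L$ in $S_0$ and deriving a contradiction. This works, but the detour through $L$ and the appeal to [\cite{Conant2015}, Proposition 3.7.19] are superfluous: you never actually use the equality $L=\mathrm{arch}(S_0)$, and your contradiction in the final step, specialized to $s=r_0$ and $L=n-k$ (which Theorem \ref{arith pro} already hands you), collapses to exactly the paper's direct computation. So the only real difference is that you prove $p=n-k$ by contradiction where the paper computes $(n-k)r_0=(n-k)r$ outright; the paper's version is shorter and yields slightly more information, while yours isolates the reusable observation that $\min S_0$ always generates the longest progression in its class.
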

    \begin{claimproof}
        We can suppose that $r\leq r_0\leq mr$. So $(n-k)r\leq (n-k)r_0\leq m(n-k)r$. But $\mathrm{arch}(S)=n-k$, it implies $m(n-k)r=(n-k)r$. So $(n-k)r=(n-k)r_0$. And $(n-k-1)r\leq (n-k-1)r_0<(n-k)r_0\leq (n-k)r$. So $\{r<2r<\cdots<(n-k)r\}\subseteq S$ is an arithmetic progression. If $s$ is an element in this Archimedean class, we have $s\leq m^{\prime}r\leq (n-k)r$ since $\mathrm{arch}(S)=n-k$.
    \end{claimproof}
    We partition $S\setminus\{0\}$ into three parts relative to this progression:
    \begin{align*}
        &S_0=\{s\in S\colon r\leq s\leq (n-k)r\},\\
        &S_1=\{s\in S\colon s< r\},\\
        &S_2=\{s\in S\colon s> (n-k)r\}.
    \end{align*}
    Moreover, we can write them as follows
    \begin{align*}
        &S_0=\{r,2r,\cdots,(n-k)r\}\sqcup\{a_1,a_2,\cdots,a_m\},\\
        &S_1=\{b_1,b_2,\cdots,b_p\},\\
        &S_2=\{c_1,c_2,\cdots,c_q\}.
    \end{align*}
    where $b_1<\cdots<b_p<r<a_1<\cdots<a_m<(n-k)r<c_1<\cdots<c_q$ with $p+q+m=k$.
    \begin{claim}\label{claim 5.2}
        $S_1\sqcup\{0\},\,S_2\sqcup\{0\}$ and $S_0\sqcup\{0\}$ are three distance monoids.
    \end{claim}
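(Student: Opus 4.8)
The plan is to reduce every case to a single closure check: each of $S_0\sqcup\{0\}$, $S_1\sqcup\{0\}$ and $S_2\sqcup\{0\}$ automatically inherits totality, positivity, the order axiom, commutativity, unity, and (crucially) associativity from the ambient distance monoid $S$, so the only thing that can fail for such a subset is closure under $\oplus$. First I would use Claim \ref{minmax} together with Theorem \ref{decomp} and Proposition \ref{unique} to identify the three pieces with unions of Archimedean classes. By Claim \ref{minmax} the Archimedean class of $r_0$ has smallest element $r$ and largest element $(n-k)r$; since distinct Archimedean classes are totally ordered (Proposition \ref{unique}(3)), any $s\in S$ with $r\leq s\leq (n-k)r$ must lie in that class, so $S_0$ is exactly the Archimedean class of $r_0$, while $S_1$ and $S_2$ are the unions of the classes lying strictly below and strictly above it.

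With this identification the case of $S_0$ is immediate: it is an Archimedean class, so by the Corollary following Proposition \ref{unique}, adjoining $0$ yields a distance submonoid. The case of $S_2$ follows at once from positivity: for $c,c'\in S_2$ with $c\leq c'$ we have $c\oplus c'\geq c'>(n-k)r$, hence $c\oplus c'\in S_2$, and closure is established.

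The only step requiring care is $S_1$, which I expect to be the main (if still brief) obstacle, since its closure does not follow from a single axiom. Here I would take $b,b'\in S_1$ with $b\leq b'$ and let $S'$ denote the Archimedean class of $b'$, one of the classes lying strictly below $S_0$. The order axiom gives $b\oplus b'\leq b'\oplus b'$, while Proposition \ref{unique}(1) gives $b'\oplus b'\in S'$; since $S'<S_0$ we obtain $b'\oplus b'<r$, and therefore $b\oplus b'<r$, i.e. $b\oplus b'\in S_1$. This establishes closure for $S_1$, and the inherited axioms then complete the verification that all three structures are distance monoids.
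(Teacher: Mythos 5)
Your proposal is correct and follows essentially the same route as the paper: reduce everything to closure under $\oplus$, then verify closure for each piece via positivity, the order axiom, and the ordering of Archimedean classes (your $S_1$ argument, $b\oplus b'\leq b'\oplus b'<r$, is just the direct form of the paper's contradiction argument). The only cosmetic difference is that for $S_0$ you identify it with the Archimedean class of $r_0$ via Claim~\ref{minmax} and invoke the corollary that a class with $0$ adjoined is a submonoid, whereas the paper checks the bound $2r\leq x\oplus y\leq (n-k)r$ directly.
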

    \begin{claimproof}
        By definition, $c_{i}\oplus c_j\in S_2$ for all $1\leq i,j\leq q$. So $S_2\sqcup\{0\}$ is a distance monoid. 

        By definition, for all $x,y\in S_0$, $r\leq x\leq (n-k)r,\,r\leq y\leq (n-k)r$. Therefore we have $2r\leq x\oplus y\leq (n-k)r$, i.e. $x\oplus y\in S_0$. So $S_0\sqcup\{0\}$ is a distance monoid.
        
        If $b_{i}\oplus b_j\geq r$ for some $1\leq i<j\leq p$, it implies $r\leq b_{i}\oplus b_j\leq 2b_j$. So $b_j$ and $r$ are in the same Archimedean class. Contradiction. We can see that $S_1\sqcup\{0\}$ is a distance monoid. 
    \end{claimproof}
    Assume $N=\sum_{m=1}^{k}DM(m)$. Since $S_1,S_2$ are two distance monoids with at most $k$ non-zero elements, there are at most $N^2$ choices of $S_1,S_2.$

    There are $\binom{n-k-2+m}{m}=O(n^m)$ choices of $\{\ell_i\}_{i=1}^m$, where $\ell_ir<a_i<(\ell_i+1)r$.

    Now we only need to consider the addition in $S_0$ and the addition between $S_0,S_1$ and $S_2$. 
    
    To consider the addition in $S_0$, we only need to consider
    $a_i\oplus\ell r$ and $a_i\oplus a_j$, where $\ell \in [n-k],\, 1\leq i \leq j \leq m$. Since the possible values are all like $\ell r$ or $a_t$. It suffices to consider $a_i\oplus r$ and $a_i\oplus a_j$, where $1\leq i \leq j \leq m$. Notice that $a_{i}\oplus r$ must be $(\ell_i+1)r$ or $(\ell_i+2)r$ or $a_{t}$ with $i\leq t\leq m$ and $a_i\oplus a_j$ must be $(\ell_i+\ell_j)r$ or $(\ell_i+\ell_j+1)r$ or $(\ell_i+\ell_j+2)r$ or $a_{t}$ with $j\leq t\leq m$. Hence the number of the addition in $S_0$ is at most 
    \[\prod_{x=1}^{m}(m-x+3)\cdot \prod_{1\leq i\leq j\leq m }(m-j+4)\leq (k+3)^{\frac{k(k+3)}{2}}.\]
    To consider the addition between $S_0,S_1, S_2,$ we only need to consider: 
\begin{enumerate}
    \item the value of $b_i\oplus r,$ where $1\leq i \leq p$. It must be $r$ or $2r$ or $a_t,$ where  $1\leq t \leq m.$
    \item the value of $b_i\oplus a_j,$ where $1\leq i \leq p,\,1\leq j \leq m$. It must be $(\ell_j+1)r$ or $(\ell_j+2)r$ or $a_t,$ where $j\leq t \leq m$.
    \item the value of $c_i\oplus r$, where $1\leq i \leq q$. It must be $c_t,$ where $i\leq t \leq q.$
    \item the value of $c_i\oplus a_j,$ where $1\leq i \leq q,\,1\leq j \leq s.$ It must be $c_t,$ where $i\leq t \leq q.$
    \item the value of $b_i\oplus c_j,$ where $1\leq i \leq p,\,1\leq j \leq q.$
    It must be $c_t,$ where $j\leq t \leq q.$
\end{enumerate}
For each of them, there are at most $(k+2)$ possibilities. Hence the number of the addition between $S_0,S_1$ and $S_2$ is at most $(k+2)^{3k^2+2k}$.

Multiplying all the possibilities that we have obtained above together, we finally obtain
\begin{align*}
&DM(n,n-k)\\
\leq &\sum_{m=0}^{k}(k-m+1)N^2\binom{n-k-2+m}{m}(k+3)^{\frac{k(k+3)}{2}} (k+2)^{3k^2+2k}\\
\leq & \sum_{m=0}^{k}(k+1)N^2(k+3)^{\frac{7k^2+7k}{2}}\binom{n-2}{m}=O(n^k). \qedhere
\end{align*}
\end{proof}
\begin{corollary}
    Fix $k\in \mathbb{N}$, there exist two constants $C_1(k)$ and $C_2(k)$ such that for all sufficiently large $n$,
    \[C_1(k)\leq \frac{DM(n,n-k)}{n^k}\leq C_2(k).\]
\end{corollary}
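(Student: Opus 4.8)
The plan is to read off both inequalities directly from results already established in this section, namely the upper bound of Theorem \ref{DM(n,n-k)} and the lower bound of Proposition \ref{lower bound for n-k}. Since $DM(n,n-k)=O(n^k)$ by Theorem \ref{DM(n,n-k)}, the very definition of the $O$-notation furnishes a constant $C_2(k)>0$ and an integer $n_1$ such that $DM(n,n-k)\leq C_2(k)\,n^k$ for all $n>n_1$; dividing by $n^k$ gives the desired upper bound $DM(n,n-k)/n^k\leq C_2(k)$.

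For the lower bound I would invoke Proposition \ref{lower bound for n-k}, which gives $DM(n,n-k)\geq \binom{n-2}{k}$ for all $n\geq k+2$. The only thing to check is that the quantity $\binom{n-2}{k}/n^k$ is bounded away from $0$ for large $n$. This is a routine asymptotic computation: writing
\[
\frac{1}{n^k}\binom{n-2}{k}=\frac{(n-2)(n-3)\cdots(n-k-1)}{k!\,n^k}=\frac{1}{k!}\prod_{j=2}^{k+1}\left(1-\frac{j}{n}\right),
\]
each factor in the product tends to $1$ as $n\to\infty$, so the whole expression converges to $1/k!$. Consequently there is an integer $n_2$ such that $\binom{n-2}{k}/n^k\geq \tfrac{1}{2\,k!}$ for all $n>n_2$, and one may take $C_1(k)=\tfrac{1}{2\,k!}$. (The degenerate case $k=0$ causes no trouble, since $\binom{n-2}{0}=1$ and $1/0!=1$.)

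Finally I would set $n_0=\max\{n_1,n_2\}$ so that both inequalities hold simultaneously for every $n>n_0$, which is exactly the statement of the corollary. There is no genuine obstacle here: the corollary is purely a packaging of the matching upper and lower bounds already proved, and the sole nontrivial ingredient is the elementary observation that $\binom{n-2}{k}$ is asymptotically $n^k/k!$, so that the polynomial order $n^k$ is sharp up to multiplicative constants.
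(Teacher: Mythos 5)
Your proposal is correct and follows exactly the paper's own route: the lower bound comes from Proposition \ref{lower bound for n-k} together with the observation that $\binom{n-2}{k}/n^k\to 1/k!$, and the upper bound is just the constant implicit in Theorem \ref{DM(n,n-k)}. The only cosmetic difference is your choice $C_1(k)=\tfrac{1}{2k!}$ versus the paper's $\tfrac{1}{k!}-\varepsilon$, which is immaterial.
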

\begin{proof}
    Combining Proposition \ref{lower bound for n-k} and Theorem \ref{DM(n,n-k)}, it is trivial since we can choose 
    \[C_1(k)=\frac{1}{k!}-\varepsilon,\ C_2(k)=(1+\varepsilon)\cdot\frac{N(k)^2(k+1)(k+3)^{\frac{7k^2+7k}{2}}}{k!},\]
    where $N(k)=\sum_{m=1}^{k}DM(m)$ and $\varepsilon>0$.
\end{proof}
\begin{remark}
    By applying the same method in Theorem \ref{DM(n,n-k)} and carrying out a careful case analysis, we can determine the exact value of $DM(n,n-2)$ for all $n\geq 9$ by the formula
    \begin{align*}
        DM(n,n-2)&=2n^2-2n-8+\delta_{3}(n)+\delta_{3}(n+1),
    \end{align*}
    where $\delta_{3}(n)=1$ if $3\mid n$ and $\delta_{3}(n)=0$ otherwise.
\end{remark}
\printbibliography[title={Bibliography}]

@phdthesis{Conant2015,
  author = {Gabriel J. Conant},
  title = {Model Theory and Combinatorics of Homogeneous Metric Spaces},
  year = {2015},
  month = oct,
  school = {University of Illinois at Chicago},
  url = {https://indigo.uic.edu/articles/thesis/Model_Theory_and_Combinatorics_of_Homogeneous_Metric_Spaces/10797824},
}

@inproceedings{propp2001many,
  title = {The many faces of alternating-sign matrices},
  author = {Propp, James},
  booktitle = {Discrete Mathematics and Theoretical Computer Science (Proceedings)},
  year = {2001},
  publisher = {Episciences.org},
}

@Inbook{DeBaets2003,
author="De Baets, Bernard
and Mesiar, Radko",
editor="Rodabaugh, Stephen Ernest
and Klement, Erich Peter",
title="Discrete Triangular Norms",
bookTitle="Topological and Algebraic Structures in Fuzzy Sets: A Handbook of Recent Developments in the Mathematics of Fuzzy Sets",
year="2003",
publisher="Springer Netherlands",
address="Dordrecht",
pages="389--400",
isbn="978-94-017-0231-7",
doi="10.1007/978-94-017-0231-7_16",
url="https://doi.org/10.1007/978-94-017-0231-7_16"
}

@article{Evans2001,
author = {Evans, K. and Konikoff, M. and Madden, J. J. and Mathis, R. and Whipple, G.},
title = {Totally Ordered Commutative Monoids},
journal = {Semigroup Forum},
year = {2001},
volume = {62},
number = {2},
pages = {249--278},
issn = {1432-2137},
doi = {10.1007/s002330010026},
url = {https://doi.org/10.1007/s002330010026},
}

@article{wehrung:hal-00004710,
  TITLE = {{Metric properties of positively ordered monoids}},
  AUTHOR = {Wehrung, Friedrich},
  URL = {https://hal.science/hal-00004710},
  JOURNAL = {{Forum Mathematicum}},
  PUBLISHER = {{De Gruyter}},
  VOLUME = {5},
  NUMBER = {5},
  PAGES = {183--201},
  YEAR = {1993},
  DOI = {10.1515/form.1993.5.183},
}

@book{de2014asymptotic,
  title={Asymptotic Methods in Analysis},
  author={de Bruijn, N.G.},
  isbn={9780486150796},
  series={Dover Books on Mathematics},
  url={https://books.google.fr/books?id=7-wxAwAAQBAJ},
  year={2014},
  publisher={Dover Publications},
}

@article{toljic2023conant,
  title={Conant's Metric Spectra Problem},
  author={Tolji{\'c}, Veljko},
  journal={arXiv preprint arXiv:2310.18721},
  year={2023}
}

@book{klement2013triangular,
  title={Triangular norms},
  author={Klement, Erich Peter and Mesiar, Radko and Pap, Endre},
  volume={8},
  year={2013},
  publisher={Springer Science \& Business Media}
}
\end{document}